\theoremstyle{plain}
\newtheorem{theorem}{Theorem}[section]
\newtheorem{lemma}[theorem]{Lemma}
\newtheorem{proposition}[theorem]{Proposition}%
\theoremstyle{definition}%
\newtheorem{remark}{Remark}%
\newtheorem{definition}{Definition}
\begin{document}

\title[Double Signs of Hamiltonian Circles  in Doubly Signed Complete Graphs]{Double Signs of Hamiltonian Circles  in Doubly Signed Complete Graphs}\footnote{}

\author{Xiyong Yan}
\subjclass[2020]{Primary: 05C45 (Hamiltonian graphs); Secondary: 05C22 (Signed and weighted graphs)}
\keywords{Doubly signed graphs, Hamiltonian cycles, triangle basis, binary cycle space}
\address{89 Park Ave,  Binghamton, NY,  USA,  13903.}
\email{xiyongyan@gmail.com}
\begin{abstract}
    We study Hamiltonian circles in the doubly signed complete graph 
\(\Sigma_n = (K_n, \sigma, \mathbb{F}_2^2)\). A circle’s double sign is defined as the sum of its edge labels. I establish conditions under which Hamiltonian circles realize all four possible double signs and prove that this occurs  when the set of triangle double signs contains at least three distinct values. The proof is based on an analysis of triangle bases of the binary cycle space, structural properties of 
$K_4$ subgraphs, and explicit Hamiltonian constructions. 
\end{abstract}

\maketitle
\section{Introduction}

Signed graphs, introduced by Harary in the 1950s, assign to each edge a sign from the group $\mathbb{F}_2$. A central object of study is the sign of a circle, defined as the sum of its edge signs. Our research question originates in the problem collection “Negative (and Positive) Circles in Signed Graphs” \cite{1}, where the author asks: if a signed graph is unbalanced and has a Hamiltonian circle, must it contain a negative Hamiltonian circle? A positive one? In \cite{2} I prove that a signed complete graph on $n$ vertices contains both a positive and a negative Hamiltonian circle if and only if it also contains both a positive and a negative triangle.

In this paper we consider the case of \emph{doubly signed graphs}, where each edge of the complete graph $K_n$ is labeled by an element of  $\mathbb{F}_2^2$. For a circle $C \subseteq K_n$, its \emph{double sign} is the sum of the labels of its edges. Every circle therefore has one of four possible values, and understanding which values occur is a natural problem. Triangles are of particular importance since they form a basis for the binary cycle space $Z_2(K_n)$ and thus determine the behavior of all circles.

Our focus is on Hamiltonian circles in $\Sigma_n = (K_n,\sigma,\mathbb{F}_2^2)$, where $n>5$. Determining the range of their double signs turns out to depend only on the diversity of double signs realized by triangles. The main theorem shows that  if triangles are restricted to at most two signs, then the Hamiltonian circles cannot achieve the full set (See Lemma \ref{lemma22} and the remark 1). Conversely, if the triangles of $\Sigma_n$ produce at least three distinct double signs, then Hamiltonian circles realize all four possible values in $\mathbb{F}_2^2$ (See Lemma \ref{lemmab},\ref{lemmac}).

\section{Triangle Double Signs: One, Two, or Three Cases}

The (binary) cycle space  of an undirected graph is the set of its even-degree subgraphs, denoted as \(Z_2(G)\), where the undirected graph is \(G=(V,E)\).

Let \(\Sigma_n = (K_n, \sigma, \mathbb{F}_2^2)\), with vertices \(v_1, v_2, \ldots, v_n\), where \(\sigma \colon E \to \mathbb{F}_2^2\). Choose a triangle basis of the  cycle space \(Z_2(K_n)\):
\[
\mathcal{B} = \{ {T_{1ij}} : 2 \le i < j \le n \} , \quad \text{where the triangle } {T_{1ij}} := \triangle\{v_1, v_i, v_j\}.
\]

\begin{lemma}\label{lemma22}
If the double signs of all triangles in \(\Sigma_n\) are either \(x\) or \(y\), where \(x, y \in \mathbb{F}_2^2\), then \(\Sigma_n\) does not contain all possible double signs of Hamiltonian circles.
\end{lemma}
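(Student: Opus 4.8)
The plan is to exploit the linearity of the double-sign map. First I would observe that, because the double sign of a circle is the $\mathbb{F}_2$-sum of its edge labels and the cycle space is generated under symmetric difference, the map $ds \colon Z_2(K_n) \to \mathbb{F}_2^2$ is $\mathbb{F}_2$-linear. Consequently, for any linear functional $\phi \colon \mathbb{F}_2^2 \to \mathbb{F}_2$, the composite $\phi \circ ds$ is again a linear functional, and it records exactly the ordinary circle sign in the single signed complete graph $\Sigma^\phi = (K_n, \phi \circ \sigma)$ obtained by pushing each edge label $\sigma(e)$ forward through $\phi$; indeed the $\Sigma^\phi$-sign of a circle $C$ is $\sum_{e \in C} \phi(\sigma(e)) = \phi\bigl(\sum_{e \in C} \sigma(e)\bigr) = \phi(ds(C))$.

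The key step is to choose $\phi$ adapted to the hypothesis. Since all triangle double signs lie in $\{x, y\}$, I would pick a nonzero $\phi$ with $\phi(x) = \phi(y)$; such a $\phi$ always exists, because the functionals annihilating $x + y$ form a nonzero subspace of the dual of $\mathbb{F}_2^2$ (if $x = y$ any nonzero $\phi$ works, and if $x \neq y$ the annihilator of the single vector $x + y$ is one-dimensional and hence contains a nonzero functional). With this choice, every triangle $T$ satisfies $\phi(ds(T)) = c$ for the common value $c := \phi(x) = \phi(y)$; that is, all triangles of $\Sigma^\phi$ carry the same ordinary sign $c$.

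Next I would invoke the triangle–Hamiltonian dichotomy for ordinary signed complete graphs. Because $\Sigma^\phi$ has no triangle of the opposite sign, it cannot contain Hamiltonian circles of both signs: this is the contrapositive of the characterization in \cite{2}; alternatively one argues directly that $c = 0$ forces $\Sigma^\phi$ to be balanced, so every circle is positive, while $c = 1$ forces $\Sigma^\phi$ to be switching-equivalent to the all-negative $K_n$, in which every circle of length $\ell$ has sign $\ell \bmod 2$ and hence every Hamiltonian circle has sign $n \bmod 2$. Either way $\phi(ds(H))$ takes a single constant value over all Hamiltonian circles $H$. Since $\phi$ is a nonzero functional on $\mathbb{F}_2^2$, its level set at that value has exactly two elements, so the double signs of Hamiltonian circles are confined to a two-element subset of $\mathbb{F}_2^2$ and cannot exhaust all four values.

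The main obstacle I anticipate is this dichotomy step, namely verifying cleanly that ``all triangles of one sign'' propagates to ``all Hamiltonian circles of one sign.'' Citing \cite{2} disposes of it immediately, but a self-contained treatment must separately handle the balanced case and the anti-balanced case, the latter requiring the length-parity computation for circles in a graph switching-equivalent to the all-negative $K_n$, together with the standing hypothesis $n > 5$ to guarantee that Hamiltonian circles exist and that the classification applies.
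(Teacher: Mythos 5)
Your proof is correct, but it takes a genuinely different route from the paper's. The paper argues directly and self-containedly: any Hamiltonian circle $H$ decomposes as the sum of the $n-2$ fan triangles $T_{1i(i+1)}$ at $v_1$, so $\sigma(H)=ax+by$ with $a+b=n-2$; a parity check then shows $\sigma(H)\in\{x,y\}$ when $n$ is odd and $\sigma(H)\in\{e,x+y\}$ when $n$ is even, which is not just a confinement to two values but an exact description of which values can occur. You instead compose the (linear) double-sign map with a nonzero functional $\phi$ annihilating $x+y$, collapsing the problem to an ordinary signed complete graph in which every triangle carries one sign, and then invoke either the characterization from \cite{2} or the balanced/anti-balanced dichotomy to conclude that $\phi(\sigma(H))$ is constant over all Hamiltonian circles, confining $\sigma(H)$ to a two-element fiber of $\phi$. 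Your reduction is more conceptual and generalizes readily (for labels in $\mathbb{F}_2^k$ the same trick works with any functional collapsing the set of triangle signs), and it connects the doubly signed setting back to the single-sign literature; the cost is reliance on heavier input --- the theorem of \cite{2}, or the anti-balance classification of complete graphs whose triangles are all negative --- where the paper needs only the fan decomposition and counting mod $2$. Note also that your dichotomy step could be made exactly as elementary as the paper's: the same fan decomposition gives $\phi(\sigma(H))=(n-2)c$ directly, with no appeal to balance theory or to \cite{2} at all, at which point your argument and the paper's differ only in whether the parity bookkeeping is done before or after projecting to $\mathbb{F}_2$.
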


\begin{proof}
Let \(H\) be any Hamiltonian circle of \(\Sigma_n\). Note that the vertex \(v_1\) is in \(H\), and \(H\) contains the sequence of vertices \(v_1, v_2', v_3', \ldots, v_n'\), where the distinct vertices \( v_2', v_3', \ldots, v_n' \in \{v_2, v_3, \ldots, v_n\}\). 

We can decompose \(H\) into triangles \(T_{1i(i+1)} := \triangle \{v_1, v_i', v_{i+1}'\}\), for \(i = 2, 3, \ldots, n - 1\).

Then,
\[
\sigma({H}) = \sum_{i=2}^{n-1} \sigma(T_{1i(i+1)}) = a x + b y,
\]
since all triangles have double signs either \(x\) or \(y\), where \(a, b \in \{0, 1, \ldots, n - 2\}\) and \(a + b = n - 2\).

We now consider two cases based on the parity of \(n\):

Case 1. \(n\) is odd. Then \(n-2\) is odd, so the sum  \(a x + b y\) must be either \(x\) or \(y\). 
Thus, \(\Sigma_n\) cannot contain all possible double signs of Hamiltonian circles.

Case 2. \(n\) is even. Then \(n-2\) is even, so the possible values of \(a x + b y\) are limited to \(x+y\) or \(e = (0,0)\). Again, \(\Sigma_n\) cannot contain all possible double signs of Hamiltonian circles.
\end{proof}

\begin{remark}\label{rem:one}
If all triangles have the same double sign $x$, then
\[\sigma(H)=\sum_{i=2}^{n-1}\sigma(T_{1i(i+1)})=t\,x\] with \(t\in\{0,1\}\).
\end{remark}

\begin{definition}[Normalization at a vertex]
To \emph{normalize} a vertex \(v\) means to perform a switching so that every edge
incident with \(v\) is labeled by the identity \(e=(0,0)\).
\end{definition}

\begin{proposition}
    If \(\Sigma_n\) contains only three distinct  double signs among all triangles, then after normalizing a vertex $v$ of \(\Sigma_n\),   every edge of \(\Sigma_n\backslash v\) has a double sign among these three types.
\end{proposition}

\begin{proof}
    Let $v_iv_j$ be any edge in \(\Sigma_n\backslash v\). Then, $$ \sigma(\triangle v_iv_jv)=\sigma(v_iv_j)+e+e=\sigma(v_iv_j).$$
\end{proof}

\begin{lemma}\label{lemma1}
If \(\Sigma_n\) has at most three distinct triangle double signs, then every \(K_4\subseteq \Sigma_n\) has at most two distinct triangle double signs.
\end{lemma}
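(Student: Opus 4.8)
The plan is to exploit a single linear identity in $\mathbb{F}_2^2$ satisfied by the four triangles of any $K_4$. Fix four vertices $a,b,c,d$ spanning a copy of $K_4\subseteq\Sigma_n$, and write its four triangle double signs as $T_1=\sigma(\triangle abc)$, $T_2=\sigma(\triangle abd)$, $T_3=\sigma(\triangle acd)$, $T_4=\sigma(\triangle bcd)$. The first step is to record the identity
\[
T_1+T_2+T_3+T_4=e=(0,0),
\]
which holds because each of the six edges of the $K_4$ lies in exactly two of these four triangles, so in $\mathbb{F}_2^2$ every edge label is added an even number of times and therefore cancels.

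Next I would classify the possible multisets $\{T_1,T_2,T_3,T_4\}$ allowed by this constraint, organized by the number of distinct values they contain. The crucial observation is the \emph{gap at three}: exactly three distinct values is impossible. Indeed, if three values occurred among four triangles, one value $p$ would appear twice and two further values $q,r$ with $q\ne r$ would each appear once; since $p+p=e$ in characteristic $2$, the identity above forces $q+r=e$, i.e.\ $q=r$, contradicting $q\ne r$. Thus every $K_4$ realizes exactly $1$, $2$, or $4$ distinct triangle double signs, where the case of $4$ can occur only when all four elements of $\mathbb{F}_2^2$ appear (their total sum being $e$, consistent with the identity).

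Finally I would invoke the global hypothesis that $\Sigma_n$ has at most three distinct triangle double signs. A $K_4$ exhibiting four distinct triangle double signs would in particular produce four distinct double signs in $\Sigma_n$, contradicting the assumption. Having already excluded the value $3$ on structural grounds, the only remaining possibilities for a $K_4$ are $1$ or $2$ distinct triangle double signs, which is exactly the claim.

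There is no serious obstacle here: once the cancellation identity $T_1+T_2+T_3+T_4=e$ is in hand, the entire argument is a short case analysis. The only genuinely load-bearing point is recognizing that the parity constraint rules out three distinct values per $K_4$, a phenomenon specific to the self-inverse structure of $\mathbb{F}_2^2$; the elimination of four distinct values is then immediate from the hypothesis.
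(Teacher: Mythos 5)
Your proof is correct and takes essentially the same approach as the paper: both arguments rest on the identity that the four triangle double signs of a $K_4$ sum to $e$ in $\mathbb{F}_2^2$, which forces any $K_4$ realizing three distinct triangle signs to realize all four elements, contradicting the global bound of three. The only cosmetic difference is that you rule out exactly three distinct values via a multiset parity argument ($q+r=e$ forces $q=r$), whereas the paper computes the fourth sign directly as $x+y+z$ and observes it is the missing element.
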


\begin{proof}[Proof (Contrapositive)]
Suppose there exists a $K_4 \subseteq \Sigma_n$ whose four triangles realize at least three distinct
double signs. Choose three triangles from this $K_4$ with distinct double signs
$x,y,z \in \mathbb{F}_2^2$. The fourth triangle then has double sign $x+y+z$.
Since $x,y,z$ are distinct, the element $x+y+z$ is the
element of $\mathbb{F}_2^2$ not in $\{x,y,z\}$. Hence $\Sigma_n$ exhibits four distinct
triangle double signs, contradicting the assumption that there are at most three.
\end{proof}

\begin{lemma}\label{lemma5}
If \(\Sigma_n\) contains only three distinct  double signs among all triangles, then the basis \(\mathcal{B}\) contains triangles realizing each of these three signs. 
\end{lemma}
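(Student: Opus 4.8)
The plan is to argue by contradiction, exploiting the normalization described in the Proposition together with the fact that switching (hence normalization) preserves the double sign of every circle, so that the set of triangle double signs remains exactly $\{x,y,z\}$ after I normalize any vertex. Suppose, contrary to the claim, that the basis $\mathcal{B}$ — the family of triangles through $v_1$ — realizes at most two of the three signs $x,y,z$; say its double signs all lie in $\{x,y\}$. I would first normalize the vertex $v_1$, which by the Proposition is legitimate and keeps the three triangle signs intact.

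After normalizing $v_1$, every edge incident to $v_1$ carries the identity $e=(0,0)$, so for any edge $v_iv_j$ with $i,j\ne 1$ the computation in the Proposition gives $\sigma(v_iv_j)=\sigma(T_{1ij})$. Since by assumption every $T_{1ij}\in\mathcal{B}$ has double sign in $\{x,y\}$, I conclude that \emph{every} edge of $\Sigma_n\setminus v_1$ has double sign in $\{x,y\}$.

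The key step is then a short parity observation in $\mathbb{F}_2^2$: any triangle $\triangle v_iv_jv_k$ avoiding $v_1$ has double sign equal to a sum of three elements drawn from $\{x,y\}$, and because $2x=2y=e$ such a sum is always again one of $x$ or $y$ (the four cases $x{+}x{+}x$, $x{+}x{+}y$, $x{+}y{+}y$, $y{+}y{+}y$ collapse to $x,y,x,y$ respectively). Hence every triangle omitting $v_1$ also has double sign in $\{x,y\}$. Combined with the assumption on $\mathcal{B}$, this forces \emph{all} triangles of $\Sigma_n$ to have double sign in $\{x,y\}$, contradicting the hypothesis that three distinct signs occur. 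Therefore $\mathcal{B}$ must realize all three signs. (The argument also rules out $\mathcal{B}$ realizing only a single sign, which is the degenerate subcase where $\{x,y\}$ is replaced by a singleton.)

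I expect the main obstacle to be conceptual rather than computational: one must be careful to justify that normalization is a genuine switching and therefore leaves every triangle double sign unchanged, so that reasoning about the normalized graph faithfully reflects $\Sigma_n$. Once that is granted, the Proposition transfers the hypothesis on $\mathcal{B}$ directly onto the edges of $\Sigma_n\setminus v_1$, and the remaining work is the elementary $\mathbb{F}_2^2$-arithmetic above.
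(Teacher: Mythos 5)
Your proof is correct, but it takes a genuinely different route from the paper's. The paper argues locally and without any switching: assuming no triangle of $\mathcal{B}$ has sign $x$, it picks one triangle $T_{234}$ of sign $x$, examines the $K_4$ on $\{v_1,v_2,v_3,v_4\}$, and combines Lemma~\ref{lemma1} (a $K_4$ realizes at most two triangle signs) with the identity $\sigma(T_{123})+\sigma(T_{234})=\sigma(T_{124})+\sigma(T_{134})$ to force one of the basis triangles $T_{124},T_{134}$ to carry sign $x$, a contradiction. You instead argue globally: normalize $v_1$, note that every edge $v_iv_j$ of $\Sigma_n\setminus v_1$ then carries the sign of the basis triangle $T_{1ij}$, and use the characteristic-two computation that any sum of three elements of $\{x,y\}$ lies again in $\{x,y\}$ to conclude that \emph{all} triangles of $\Sigma_n$ would have signs in $\{x,y\}$, contradicting the hypothesis of three distinct signs. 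Your route dispenses with Lemma~\ref{lemma1} entirely, treats the degenerate one-sign case uniformly, and in fact proves something stronger: if $\mathcal{B}$ realizes at most two signs, then so does the whole graph. The price is that you must invoke the invariance of circle double signs under switching over the abelian group $\mathbb{F}_2^2$ --- a standard fact, and one the paper itself relies on implicitly (in the Proposition and in the proof of Lemma~\ref{lemmab}), but be aware that the Proposition does not itself justify that normalization exists or preserves circle signs; that justification is part of the definition of switching, and you should state it explicitly rather than attribute it to the Proposition. The paper's argument, by contrast, is shorter, purely local, and needs no switching machinery at all.
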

\begin{proof}
Let the three distinct double signs be \(x,y,z\in\mathbb F_2^2\). Suppose no triangle in \(\mathcal{B}\) has double sign \(x\).
Then, Without loss of generality, say some triangle \(T_{234}\) outside \(\mathcal{B}\) has double sign \(x\).
Consider the induced \(K_4\) on \(\{v_1,v_2,v_3,v_4\}\). The  triangle \(T_{123}\) has double sign
\(\alpha\in\{y,z\}\). Observe that $\sigma(T_{123})+\sigma(T_{234})=\sigma(T_{124})+\sigma(T_{134})$. That is $\alpha+x=\sigma(T_{124})+\sigma(T_{134})$. By Lemma~\ref{lemma1}, \(K_4\) cannot realize three distinct double signs,
so one of \(T_{124}\) or \(T_{134}\) must have double sign \(x\), contradicting the assumption
that \(\mathcal{B}\) contains no triangle of double sign \(x\). The same argument applies to \(y\) and \(z\).
\end{proof}

\begin{definition}
We say that three triangles \(T_{1ab},T_{1bc},T_{1cd} \in \mathcal{B}\) are \emph{consecutive} 
if they share successive edges at the common vertex \(v_1\), that is,
\(T_{1ab}\) and \(T_{1bc}\) share the edge \(v_1 v_b\), and
\(T_{1bc}\) and \(T_{1cd}\) share the edge \(v_1 v_c\),
with \(a,b,c,d\) distinct.
\end{definition}

\begin{lemma}\label{lemmab}

If \(\Sigma_n\) contains only three distinct  double signs among all triangles, then \(\Sigma_n\) contains all possible double signs of Hamiltonian circles.
\end{lemma}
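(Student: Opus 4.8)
The plan is to reduce the problem to a statement about Hamiltonian paths in the link of $v_1$ and then to control the double sign by local swaps. First I would switch $\Sigma_n$ so as to \emph{normalize} $v_1$; since switching alters each circle's double sign only by a contribution that appears twice at every vertex (hence trivially, as $2\,g=e$ in $\mathbb{F}_2^2$), it preserves the double sign of every Hamiltonian circle. After normalizing, every spoke $v_1v_i$ carries the identity $e$, so for a Hamiltonian circle written as $v_1,u_1,u_2,\dots,u_m$ (with $m=n-1$ and $u_1,\dots,u_m$ an ordering of $v_2,\dots,v_n$) the triangle decomposition used earlier gives
\[
\sigma(H)=\sum_{i=2}^{n-1}\sigma(T_{1i(i+1)})=\sum_{i=1}^{m-1}\sigma(u_iu_{i+1}),
\]
the sum of the labels along the outer Hamiltonian path $u_1\cdots u_m$ in $\Sigma_n\setminus v_1$. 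By Lemma~\ref{lemma5} the basis $\mathcal{B}$ realizes all three triangle signs $x,y,z$, and after normalization these basis triangles are exactly the edges of $\Sigma_n\setminus v_1$; together with the preceding Proposition this shows every outer edge is labeled from $\{x,y,z\}$ and that all three labels occur. Thus it suffices to show that, as the outer path ranges over all orderings, its edge-label sum realizes all four elements of $\mathbb{F}_2^2$.

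Second, I would record that this sum depends only on the \emph{parities} of the numbers of outer edges of each label: if the path uses $\alpha,\beta,\gamma$ edges of labels $x,y,z$ then $\sigma(H)=\alpha x+\beta y+\gamma z$ with $\alpha+\beta+\gamma=n-2$, so only $(\alpha,\beta,\gamma)\bmod 2$ matters and the total parity is forced to be $\equiv n \pmod 2$. A short case analysis on the three signs then finishes the arithmetic: either $\{x,y,z\}$ are the three nonzero elements (so any two sum to the third), or one of them is $e$. In both cases the four admissible target values correspond to four parity vectors all having the correct total parity, and each target is already determined by the parities of the counts of two \emph{designated} labels, the third serving as filler. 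Hence the problem reduces to the purely combinatorial task of producing Hamiltonian paths whose counts of two chosen labels realize all four parity combinations.

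Third, for the construction I would use adjacent transpositions of the outer order as the basic move. Swapping two consecutive outer vertices $u,v$ sitting as $\dots,p,u,v,q,\dots$ replaces the edges $pu,vq$ by $pv,uq$, and a direct computation (the shared edge $uv$ cancels, since $2\sigma(uv)=e$) shows the double sign changes by
\[
\sigma(\triangle puv)+\sigma(\triangle uvq),
\]
the sum of the two triangle signs flanking the edge $uv$. Every such change is therefore a sum of two elements of $\{x,y,z\}$, and the set of achievable changes generates all of $\mathbb{F}_2^2$. Starting from any base Hamiltonian circle and applying a sequence of such swaps, I would steer $\sigma(H)$ through all four values, the abundance of vertices ($m=n-1\ge 5$) guaranteeing room to perform the moves independently.

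The hard part will be Step 3: in an adversarial labeling one must guarantee that swaps producing the \emph{needed} sign-changes exist and can be performed consecutively without undoing one another. Here I expect to lean on Lemma~\ref{lemma1}: no $K_4$—equivalently, no triangle of $\Sigma_n\setminus v_1$—is rainbow, so every triangle uses at most two labels, which both restricts the possible flanking pairs and, because all three labels are present, forces the existence of two edge-sharing triangles with distinct signs. Locating such a configuration inside a chosen outer order, and targeting the two designated labels of Step 2, should yield swaps that flip exactly the two intended parities, completing the passage to all four double signs.
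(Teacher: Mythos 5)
Your Step 1 reduction is sound, and it is in fact a clean reformulation of the paper's setting: after normalizing $v_1$, the outer edge labels $\sigma(v_iv_j)$ coincide with the triangle signs $\sigma(T_{1ij})$, Lemma~\ref{lemma1} becomes the statement that no triangle of $\Sigma_n\setminus v_1$ is rainbow, and the lemma reduces to showing that Hamiltonian paths of an edge-labeled $K_{n-1}$ (labels in $\{x,y,z\}$, all three present, no rainbow triangle) realize all four sums in $\mathbb{F}_2^2$. Your Step 2 parity analysis is also correct, but be clear about what it delivers: it only shows that the parity constraint $\alpha+\beta+\gamma\equiv n\pmod 2$ does not \emph{obstruct} any of the four targets. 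It says nothing about which parity vectors are actually realized by Hamiltonian paths of the given labeling.

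The genuine gap is Step 3, and you flagged it yourself. There are two distinct problems. First, ``the set of achievable changes generates all of $\mathbb{F}_2^2$'' is not the statement you need: the changes available from a given circle depend on that circle, and from a particular ordering every flanking triangle pair could be monochromatic (so every adjacent swap changes the sign by $e$), or the available changes could miss exactly the value you want. Reachability in a swap graph whose edge labels vary with position does not follow from the fact that the labels, collected over all positions and all circles, generate the group. Second, the sentence ``locating such a configuration inside a chosen outer order \ldots should yield swaps that flip exactly the two intended parities'' is precisely where all of the work lies: one must show that in an \emph{adversarial} no-rainbow labeling one can always maneuver two edge-sharing triangles with distinct signs into flanking position and then chain swaps without undoing earlier sign changes. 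The paper does exactly this missing work by case analysis: Lemma~\ref{lemma5} forces all three signs into $\mathcal{B}$; Case 1 versus Case 2 distinguishes whether three consecutive basis triangles carry distinct signs; Lemma~\ref{lemma1} pins down the possible $K_4$ (and Figure~\ref{three4}) configurations up to isomorphism; and explicit circles such as $H_r^{ij}=H_r\,\Delta\,T_{5ij}$ are exhibited whose signs $k+x+z$, $k+y+z$, $k$, $k+x+y$ are pairwise distinct. Until your Step 3 is carried out at that level of concreteness---exhibiting the swaps or circles in every admissible configuration---the proposal is a plan, not a proof.
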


\begin{proof}

By Lemma~\ref{lemma5}, \(\mathcal{B}\) contains all three distinct  double signs among its triangles. 

We consider two cases. 

Case 1. Suppose there exist three consecutive triangles in $\mathcal{B}$ with distinct double signs \(x,y,z\in\mathbb{F}_2^2\); Without loss of generality, label their five vertices \(v_1,v_2,v_3,v_4,v_5\) as in the figure \ref{three1} .

\begin{figure}[H]
    \centering
\begin{tikzpicture}[scale=2]
  \coordinate (v1) at (-1,0.85);
  \coordinate (v2) at (-1,-0.2);
  \coordinate (v3) at (0.6,-.2);
  \coordinate (v4) at (0.6,0.5);
 \coordinate (v5) at (0.5,1.2);
  \draw[thick] (v2) -- (v1) -- (v3) -- cycle; 
  \draw[thick] (v3) -- (v1) -- (v4) -- cycle; 
  \draw[thick] (v2) -- (v1) -- (v4);     
   \draw[thick] (v1) -- (v5) -- (v4);  
   

  \node at (-0.5,0.2) {\(x\)};
  \node at (0.3,0.3) {\(y\)};
  \node at (0.4,0.8) {\(z\)};
  
  \node[above] at (v1) {\(v_1\)};
    \node[below] at (v2) {\(v_2\)};
\node[below] at (v3) {\(v_3\)};
\node[right] at (v4) {\(v_4\)};
\node[right] at (v5) {\(v_5\)};

\end{tikzpicture}
 \caption{}
\label{three1}

\end{figure}

Normalize at \(v_5\). Then the edge \(v_1v_4\) has double sign \(z\). 
There are exactly two isomorphism types of assignments for the induced subgraph \(K_4\) on \(\{v_1,v_2,v_3,v_4\}\), where distinct $r,t\in\{x,y\}$, as follows :

\begin{figure}[H]
    \centering
\begin{tikzpicture}[scale=3, every node/.style={}]
  \coordinate (A1) at (0,1); 
  \coordinate (B1) at (0,0); 
  \coordinate (C1) at (1,0); 
  \coordinate (D1) at (1,1); 

  \coordinate (A2) at (2,1); 
  \coordinate (B2) at (2,0); 
  \coordinate (C2) at (3,0); 
  \coordinate (D2) at (3,1); 

  \draw[thick] (A1) -- (B1) -- (C1) -- (D1) -- cycle;
  \draw[thick] (A1) -- (C1);
 \draw[thick] (B1) -- (D1);
  \draw[thick] (A2) -- (B2) -- (C2) -- (D2) -- cycle;
  \draw[thick] (A2) -- (C2);
\draw[thick] (B2) -- (D2);
  \node[above left] at (A1) {\(v_1\)};
  \node[below left] at (B1) {\(v_2\)};
  \node[below right] at (C1) {\(v_3\)};
  \node[above right] at (D1) {\(v_4\)};
  \node at (-0.1,0.5) {\(r\)};
  \node at (0.3,0.8) {\(t\)};
  \node at (0.5,1.1) {\(z\)};
  \node at (1.1,0.5) {\(z\)};
  \node at (0.5,-.1) {\(t\)};
  \node at (0.6,0.7) {\(z\)};

  \node[above left] at (A2) {\(v_1\)};
  \node[below left] at (B2) {\(v_2\)};
  \node[below right] at (C2) {\(v_3\)};
  \node[above right] at (D2) {\(v_4\)};
  \node at (3.1,0.75) {\(y\)};
 
  \node at (2.5,1.1) {\(z\)};
  \node at (2.3,0.8) {\(z\)};
  \node at (2.5,-.1) {\(z\)};
  \node at (1.9,0.5) {\(x\)};
 \node at (2.7,0.8) {\(z\)};
\end{tikzpicture}

 \caption{}
 \label{three2}
\end{figure}

To show that there are exactly two isomorphism classes of assignments, we distinguish two cases.
Observe that $\sigma(v_1v_4)=z$ and $\sigma(T_{134})=y$, so   $\sigma(v_1v_3)+\sigma(v_3v_4)+z=y$.  Since each edge sign is either $x,y$ or $z$ in $\Sigma_n\backslash v_5$, we have $\sigma(v_3v_4)=y$ or $z$. 

Case A.  $\sigma(v_3v_4)=y$. Then, $\sigma(v_1v_3)=z$. Now $\sigma(T_{123})=x$, that is $\sigma(v_1v_3)+\sigma(v_1v_2)+\sigma(v_2v_3)=x$, this implies $\sigma(v_1v_2)+\sigma(v_2v_3)=x+z.$
Thus, $\sigma(v_1v_2)=x$ or $z$. If $\sigma(v_1v_2)=x$, then $\sigma(v_3v_2)=z$.   By Lemma \ref{lemma1}, $\sigma(T_{124})=x$ or $y$,  this forces $\sigma(v_2v_4)=z$. This is isomorphic to    the right panel of Figure \ref{three2}.      If $\sigma(v_1v_2)=z$, then $\sigma(v_3v_2)=x$.   By Lemma \ref{lemma1} again, this forces $\sigma(v_2v_4)=x$ or $y$. This is isomorphic to the left panel of Figure \ref{three2}.

Case B. $\sigma(v_3v_4)=z$. Then, $\sigma(v_1v_3)=y$. With a similar reason, we have $\sigma(v_1v_2)+\sigma(v_2v_3)=x+y.$.  
Thus, $\sigma(v_1v_2)=x$ or $y$. If $\sigma(v_1v_2)=x$, then $\sigma(v_3v_2)=y$.   By Lemma \ref{lemma1}, this forces $\sigma(v_2v_4)=z$. This is isomorphic to the left panel of Figure \ref{three2}.      If $\sigma(v_1v_2)=y$, then $\sigma(v_3v_2)=x$.  This forces $\sigma(v_2v_4)=z$. This is isomorphic to the left panel of Figure \ref{three2}.

\vspace{0.5cm}

Now, we will construct four Hamiltonian circles with distinct double signs for the left figure above.  
For the right figure, we use a similar method to construct four Hamiltonian circles with distinct double signs and will omit the details.  

Let \(p_{6n}\) denote the path \(v_6 v_7 \ldots v_n\).  Connect \(v_2\) to \(v_6\) and \(v_4\) to \(v_n\).  
This yields two Hamiltonian circles in \(\Sigma_n \setminus v_5\):  
\[
H_1 := v_4 v_1 v_3 v_2 v_6 v_7 \ldots v_n, 
\quad 
H_2 := v_4 v_3 v_1 v_2 v_6 v_7 \ldots v_n.
\]
Let $H := v_4 v_1 v_2 p_{6n}$ and set $\phi(H) = k \in \mathbb{F}_2^2$. 
Then
\[
\phi(H_1) = k + x, \qquad \phi(H_2) = k + y.
\]

For an edge $e_{ij} = v_i v_j$ of $H_r$ ($r \in \{1,2\}$), let $H_r^{ij} := H_r \Delta T_{5ij}$.
We have:
\begin{align*}
\phi(H_{1}^{14}) &= \phi(H_1) + \phi(T_{5\,1\,4}) = k + x + z,\\
\phi(H_{2}^{34}) &= \phi(H_2) + \phi(T_{5\,3\,4}) = k + y + z,\\
\phi(H_{2}^{31}) &= \phi(H_2) + \phi(T_{5\,3\,1}) = k + y + y = k,\\
\phi(H_{2}^{21}) &= \phi(H_2) + \phi(T_{5\,2\,1}) = k + y + x.
\end{align*}
Because $x,y,z$ are distinct, the four elements 
\[
k+x+z,\quad k+y+z,\quad k,\quad k+y+x
\]
are all distinct. 
Thus $H_{1}^{14}, H_{2}^{34}, H_{2}^{31}, H_{2}^{21}$ are Hamiltonian circles in $\Sigma_n$ with four distinct double signs.

Case 2. Suppose there do not exist three consecutive triangles with distinct double signs in \(\mathcal{B}\).   
Notice that $\mathcal{B}$ contains two triangles sharing a common edge but having different double signs, as well as another triangle in $\mathcal{B}$ that is edge-disjoint from these two and has the remaining double sign.
  Without loss of generality, assume the two triangles have double signs \(x\) and \(y\) with vertices \(v_1, v_2, v_3, v_4\), and the other triangle has double sign \(z\) with vertices \(v_1, v_5, v_6\), as shown in the figure below, and then  normalize \(v_5\), so $\sigma(e_{16})=z$.  

\begin{figure}[H]
    \centering
\begin{tikzpicture}[scale=2.5]
  \coordinate (v1) at (-1,0.85);
  \coordinate (v2) at (-1,-0.2);
  \coordinate (v3) at (0.3,-.2);
  \coordinate (v4) at (0.3,0.5);
 \coordinate (v5) at (.3,1);
  \coordinate (v6) at (.3,1.6);
  \draw[thick] (v2) -- (v1) -- (v3) -- cycle; 
  \draw[thick] (v3) -- (v1) -- (v4) -- cycle; 
  \draw[thick] (v2) -- (v1) -- (v4);     
   \draw[thick] (v1) -- (v5) ;  
   \draw[thick] (v1) -- (v6) -- (v5); 

  \node at (-0.5,0.2) {\(x\)};
  \node at (0.1,0.3) {\(y\)};
  \node at (-0.2,1.2) {\(z\)};
  
  \node[above] at (v1) {\(v_1\)};
    \node[below] at (v2) {\(v_2\)};
\node[below] at (v3) {\(v_3\)};
\node[right] at (v4) {\(v_4\)};
\node[right] at (v5) {\(v_5\)};
\node[right] at (v6) {\(v_6\)};
\end{tikzpicture}
\label{three3}
 \caption{}
\end{figure}

Subcase 1. If some edge of $K_4$ on $\{v_1,v_2,v_3,v_4\}$ has double sign $z$, the configuration reduces to Case~1 and yields four distinct Hamiltonian circle double signs as before.

Subcase 2. Suppose no edge in this \(K_4\) has double sign \(z\), i.e., all its edges have double signs \(x\) or \(y\).  
Connect \(v_2\) to \(v_4\) and delete the edge \(v_1 v_3\) from the last figure to obtain the new figure $F$.  
By Lemma \ref{lemma1}, the triangle \(T_{124}\) has double sign \(\alpha\), where \(\alpha \in \{x, y\}\),  
and \(T_{234}\) has double sign \(\beta\), where \(\beta \in \{x, y\} \setminus \{\alpha\}\). There are three isomorphic
types of assignments for $F$, as follows:

\begin{figure}[H]
    \centering
\begin{tikzpicture}[scale=2.5]

\begin{scope}[shift={(0,0)}]
  \coordinate (v1) at (-1,0.85);
  \coordinate (v2) at (-1,-0.2);
  \coordinate (v3) at (0.3,-.2);
  \coordinate (v4) at (0.3,0.5);
  \coordinate (v5) at (0.3,1);
  \coordinate (v6) at (0.3,1.6);

\node at (-0.2,-0.3) {\(\beta\)};
  \node at (-0.2,0.7) {\(\beta\)};
  \node at (0.4,.1) {\(\beta\)};
\node at (-0.4,.2) {\(\beta\)};

  \draw[thick] (v1) -- (v2) -- (v3) ;
  \draw[thick] (v2) -- (v3) -- (v4) ;
  \draw[thick] (v2) -- (v1) -- (v4);     
  \draw[thick] (v1) -- (v5);  
  \draw[thick] (v1) -- (v6) -- (v5); 
 \draw[thick] (v2) -- (v4);
 
  \node at (-1.1,0.2) {\(\alpha\)};
  
  \node at (-0.2,1.4) {\(z\)};
  \node[above] at (v1) {\(v_1\)};
  \node[below] at (v2) {\(v_2\)};
  \node[below] at (v3) {\(v_3\)};
  \node[right] at (v4) {\(v_4\)};
  \node[right] at (v5) {\(v_5\)};
  \node[right] at (v6) {\(v_6\)};
\end{scope}

\begin{scope}[shift={(2,0)}]
   \coordinate (v1) at (-1,0.85);
  \coordinate (v2) at (-1,-0.2);
  \coordinate (v3) at (0.3,-.2);
  \coordinate (v4) at (0.3,0.5);
  \coordinate (v5) at (0.3,1);
  \coordinate (v6) at (0.3,1.6);

\node at (-0.2,-0.3) {\(\alpha\)};
  \node at (-0.2,0.7) {\(\beta\)};
  \node at (0.4,.1) {\(\alpha\)};
\node at (-0.4,.2) {\(\beta\)};

  \draw[thick] (v1) -- (v2) -- (v3) ;
  \draw[thick] (v2) -- (v3) -- (v4) ;
  \draw[thick] (v2) -- (v1) -- (v4);     
  \draw[thick] (v1) -- (v5);  
  \draw[thick] (v1) -- (v6) -- (v5); 
 \draw[thick] (v2) -- (v4);
 
  \node at (-1.1,0.2) {\(\alpha\)};
  
  \node at (-0.2,1.4) {\(z\)};
  \node[above] at (v1) {\(v_1\)};
  \node[below] at (v2) {\(v_2\)};
  \node[below] at (v3) {\(v_3\)};
  \node[right] at (v4) {\(v_4\)};
  \node[right] at (v5) {\(v_5\)};
  \node[right] at (v6) {\(v_6\)};
\end{scope}

\begin{scope}[shift={(4,0)}]
   \coordinate (v1) at (-1,0.85);
  \coordinate (v2) at (-1,-0.2);
  \coordinate (v3) at (0.3,-.2);
  \coordinate (v4) at (0.3,0.5);
  \coordinate (v5) at (0.3,1);
  \coordinate (v6) at (0.3,1.6);

\node at (-0.2,-0.3) {\(\alpha\)};
  \node at (-0.2,0.7) {\(\alpha\)};
  \node at (0.4,.1) {\(\beta\)};
\node at (-0.4,.2) {\(\alpha\)};

  \draw[thick] (v1) -- (v2) -- (v3) ;
  \draw[thick] (v2) -- (v3) -- (v4) ;
  \draw[thick] (v2) -- (v1) -- (v4);     
  \draw[thick] (v1) -- (v5);  
  \draw[thick] (v1) -- (v6) -- (v5); 
 \draw[thick] (v2) -- (v4);
 
  \node at (-1.1,0.2) {\(\alpha\)};
  
  \node at (-0.2,1.4) {\(z\)};
  \node[above] at (v1) {\(v_1\)};
  \node[below] at (v2) {\(v_2\)};
  \node[below] at (v3) {\(v_3\)};
  \node[right] at (v4) {\(v_4\)};
  \node[right] at (v5) {\(v_5\)};
  \node[right] at (v6) {\(v_6\)};
\end{scope}

\end{tikzpicture}

 \caption{}
 \label{three4}
\end{figure}

\vspace{1cm}

Use the left panel of Figure \ref{three4}, we can construct two Hamiltonian circles in \(\Sigma_n \setminus v_5\):  
\[
H_1 := v_6 v_1 v_2 v_4 v_3 v_7 v_8 \ldots v_n, 
\quad 
H_2 := v_6 v_1 v_4 v_2 v_3 v_7 v_8 \ldots v_n.
\]
Denote $J:=v_6 v_1 v_2  v_3 v_7 v_8 \ldots v_n$, assume that $\phi(J)=l$, for some $l \in \mathbb{F}_2^2$. We have
$\phi(H_1) = \phi(J) + \phi(T_{234})
= l+ \beta$, \quad
$\phi(H_2) = \phi(J) + \phi(T_{124})= l+ \alpha.$
Construct Hamiltonian circles in $\Sigma_n$, by using $H_1$ or $H_2$ with some triangles, as follows: 
\[
\begin{aligned}
\phi(H_1^{16})&=\phi(H_1)+\phi(T_{516})=l+\beta+z\\
\phi(H_1^{12})&=\phi(H_1)+\phi(T_{512})=l+\beta+\alpha\\
\phi(H_1^{24})&=\phi(H_1)+\phi(T_{524})=l+\beta+\beta=l\\
\phi(H_2^{16})&=\phi(H_2)+\phi(T_{516})=l+\alpha+z.
\end{aligned}
\]
Since $\alpha, \beta,z$ are distinct, the four values 
\[
l+\beta+z,\quad l+\beta+\alpha,\quad l,\quad l+\alpha+z
\]
are distinct in $\mathbb{F}_2^2$. 
Thus, these yield four Hamiltonian circles with distinct double signs.

The same reasoning applies to the second and third assignment types in Subcase 2.

\smallskip
In all cases, we obtain four Hamiltonian circles in $\Sigma_n$ with distinct double signs. 
\end{proof}

\section{Triangle Double Signs with Four Values}

\begin{definition}\label{def:sigma4-tri-distinct}
A doubly signed complete graph on four vertices is a triple
\(\Sigma_4 := (K_4,\sigma,\mathbb{F}_2^2)\), where
\(\sigma : E(K_4) \to \mathbb{F}_2^2\).
If the four triangles of \(K_4\) have pairwise distinct double signs, we write
\(\Sigma_4^*\).

If, in such a \(\Sigma_4^*\), there are exactly three edges that either share a
common vertex or form a triangle and these three edges have a common double sign
\(x \in \mathbb{F}_2^2\), we write \(\Sigma_4^{**}\). Notice that double signs of six edges in \(\Sigma_4^{**}\) are $x,x,x,y,z,$ and $w$, where $x,y,z, w\in \mathbb{F}_2^2$ are distinct.
\end{definition}

\begin{lemma}\label{lemma14}
    If $\Sigma_4:=(K_4,\sigma,\mathbb{F}_2^2)$ does not contain four distinct double signs of triangles, then its four triangles have double signs $x,x,y,y$ for some $x,y\in\mathbb{F}_2^2$.
\end{lemma}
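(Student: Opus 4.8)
The plan is to exploit the single linear identity that the four triangle double signs of any $K_4$ are forced to satisfy. First I would record the elementary combinatorial fact that in $K_4$ every one of the six edges lies in exactly two of the four triangles. Summing the edge labels over all four triangles therefore counts each edge label with multiplicity two, so $\sum_{T}\sigma(T)=2\sum_{e}\sigma(e)$; since we work in $\mathbb{F}_2^2$, where every element is its own additive inverse, the right-hand side vanishes. Hence the four triangle double signs always sum to the identity $e=(0,0)$, regardless of $\sigma$.

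With this identity in hand, the hypothesis does the rest. Write the four double signs as $s_1,s_2,s_3,s_4$, so that $s_1+s_2+s_3+s_4=e$. By assumption they are not pairwise distinct, so at least two of them coincide; after relabeling set $s_1=s_2=:x$. Then $s_3+s_4=s_1+s_2=x+x=e$, and because every element of $\mathbb{F}_2^2$ is its own inverse, the equation $s_3+s_4=e$ forces $s_3=s_4=:y$. Thus the multiset of triangle double signs is exactly $\{x,x,y,y\}$, which is the claimed form.

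I expect essentially no real obstacle here; the only point worth flagging is the degenerate possibility $x=y$, i.e.\ all four triangles sharing a single double sign. The statement ``for some $x,y\in\mathbb{F}_2^2$'' already permits $x=y$, so that case is automatically subsumed and needs no separate treatment. As a sanity check one could instead enumerate all multisets of $\mathbb{F}_2^2$ summing to $e$, which are precisely $\{w,w,w,w\}$, the two-pair patterns $\{w,w,u,u\}$ with $w\neq u$, and the full set $\{(0,0),(0,1),(1,0),(1,1)\}$; discarding the last (the four-distinct configuration excluded by hypothesis) leaves exactly the $x,x,y,y$ patterns. I prefer the short identity argument above, since it avoids case analysis and makes transparent why the pairing is forced by the characteristic-two structure of $\mathbb{F}_2^2$.
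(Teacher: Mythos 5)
Your proof is correct, and it is the natural argument. The paper actually states this lemma without any proof, so there is nothing to compare line by line; but the identity you establish first — each edge of $K_4$ lies in exactly two of the four triangles, hence the four triangle double signs sum to the identity of $\mathbb{F}_2^2$ — is precisely the fact the paper uses implicitly in the proof of Lemma~\ref{lemma1}, where the fourth triangle of a $K_4$ is asserted to have double sign $x+y+z$. Your pair-forcing step ($s_1=s_2$ implies $s_3+s_4=e$, hence $s_3=s_4$ in characteristic two) completes the argument cleanly, and your remark that the case $x=y$ is subsumed by the statement is accurate. In short: your write-up is consistent with the paper's method and in fact supplies a proof the paper omits.
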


\begin{lemma}\label{lemma15}
   Normalize $v_1$. If a path of $\Sigma_n\backslash v_1$ contains four distinct double signs of edges, then  $\Sigma_n$ contains all four distinct double signs of Hamiltonian circles.
\end{lemma}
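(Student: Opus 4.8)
The plan is to reuse the vertex-insertion (symmetric-difference) technique from the proof of Lemma~\ref{lemmab}. The key observation is that once $v_1$ is normalized, every edge incident with $v_1$ has double sign $e=(0,0)$, so for any $i,j$ the triangle $T_{1ij}$ satisfies
\[
\sigma(T_{1ij}) = \sigma(v_1v_i)+\sigma(v_iv_j)+\sigma(v_jv_1)=\sigma(v_iv_j).
\]
Moreover, if $H$ is any Hamiltonian circle of $\Sigma_n\setminus v_1$ that uses the edge $v_iv_j$, then $H\,\Delta\,T_{1ij}$ deletes $v_iv_j$ and adds $v_1v_i$ and $v_1v_j$; that is, it inserts $v_1$ between $v_i$ and $v_j$, and the result is a Hamiltonian circle of $\Sigma_n$ whose double sign is
\[
\sigma(H\,\Delta\,T_{1ij})=\sigma(H)+\sigma(T_{1ij})=\sigma(H)+\sigma(v_iv_j).
\]
This reduces the problem of producing four distinct Hamiltonian double signs to that of exhibiting four edges of distinct double sign that all lie on a single Hamiltonian circle of $\Sigma_n\setminus v_1$.

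First I would take the given path $P$ of $\Sigma_n\setminus v_1$ carrying four distinct edge double signs and fix four of its edges $e_1,e_2,e_3,e_4$ realizing those signs. Since $|\mathbb{F}_2^2|=4$, the set $\{\sigma(e_1),\sigma(e_2),\sigma(e_3),\sigma(e_4)\}$ is exactly $\mathbb{F}_2^2$. Next, because $\Sigma_n\setminus v_1=K_{n-1}$ is complete on $n-1\ge 5$ vertices, I would extend $P$ to a Hamiltonian path of $K_{n-1}$ and close it into a Hamiltonian circle $H$ of $\Sigma_n\setminus v_1$, keeping $P$ as a subpath; in particular all four edges $e_1,\dots,e_4$ remain edges of $H$.

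Finally, for each $i$ I would form $H_i:=H\,\Delta\,T_{1e_i}$, the Hamiltonian circle of $\Sigma_n$ obtained by inserting $v_1$ into $H$ at the edge $e_i$. By the computation above, $\sigma(H_i)=\sigma(H)+\sigma(e_i)$. As $\sigma(e_i)$ runs over all of $\mathbb{F}_2^2$ and translation by the fixed element $\sigma(H)$ is a bijection of $\mathbb{F}_2^2$, the four values $\sigma(H)+\sigma(e_i)$ are precisely the four elements of $\mathbb{F}_2^2$. Hence $H_1,H_2,H_3,H_4$ are Hamiltonian circles of $\Sigma_n$ realizing all four double signs.

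The one point requiring care, and the step I expect to be the main (though mild) obstacle, is guaranteeing that all four distinguished edges survive into a single Hamiltonian circle $H$. This is immediate here because they already lie on the common path $P$, and any path of a complete graph extends to a Hamiltonian circle containing it; the accompanying claim that each symmetric difference $H\,\Delta\,T_{1e_i}$ is genuinely a Hamiltonian circle (rather than merely an even subgraph) follows from the explicit insertion description. Both facts become routine once the normalization collapses triangle double signs to edge double signs.
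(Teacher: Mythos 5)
Your proof is correct. Note that the paper states Lemma~\ref{lemma15} without any proof, so there is no written argument to compare against; your proof fills that gap, and it does so with exactly the technique the paper itself uses elsewhere: normalizing so that $\sigma(T_{1ij})=\sigma(v_iv_j)$, closing the given path into a Hamiltonian circle $H$ of $\Sigma_n\setminus v_1$, and inserting $v_1$ via the symmetric difference $H\,\Delta\,T_{1ij}$ (compare $H_r^{ij}:=H_r\,\Delta\,T_{5ij}$ in the proof of Lemma~\ref{lemmab}). This is evidently the intended argument, and all the steps you flag as needing care (extending a path of a complete graph to a Hamiltonian circle, and the symmetric difference being a genuine Hamiltonian circle) check out.
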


\begin{lemma}\label{lemmac}
If \(\Sigma_n\) contains four distinct  double signs among all triangles and $n>5$, then \(\Sigma_n\) contains all possible double signs of Hamiltonian circles.
\end{lemma}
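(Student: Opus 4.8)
The plan is to deduce the statement from Lemma~\ref{lemma15}. Recall that normalizing a vertex $v$ turns every triangle $\triangle\{v,v_i,v_j\}$ into the single edge $v_iv_j$, so the edge double signs of $\Sigma_n\setminus v$ after normalizing $v$ are exactly the (switching-invariant) double signs $\sigma(\triangle\{v,v_i,v_j\})$ of the triangles through $v$; I call this the \emph{link coloring} $g_v$ of $\Sigma_n\setminus v$. By Lemma~\ref{lemma15} it suffices to produce one vertex $v$ and one path in $\Sigma_n\setminus v$ whose edges realize all four double signs, i.e.\ a path in $(K_{n-1},g_v)$ carrying all four colours. (Inserting the normalized $v$ into the four differently coloured edges of such a path and closing up gives four Hamiltonian circles whose double signs are a fixed constant plus the four colours, hence all of $\mathbb{F}_2^2$; this is the content of Lemma~\ref{lemma15}.)

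The first key step is to find a vertex whose link uses all four colours, i.e.\ a vertex lying on triangles of all four double signs. Suppose to the contrary that every vertex sees at most three double signs, and pick a vertex $v$ whose link uses the maximum number of colours. If $g_v$ used at most two colours, then every triangle double sign off $v$ (a sum of three $g_v$-values) would again lie in those two colours, so $\Sigma_n$ would have at most two triangle double signs, a contradiction; hence $g_v$ uses exactly three colours $\{x,y,z\}$. As the fourth colour $w=x+y+z$ is realized somewhere, it must occur as a sum $g_v(ij)+g_v(jk)+g_v(ik)$, forcing a rainbow triangle $\{v_i,v_j,v_k\}$ in $g_v$; then $\{v,v_i,v_j,v_k\}$ induces a $\Sigma_4^{*}$ (Definition~\ref{def:sigma4-tri-distinct}). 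In this $\Sigma_4^{*}$ each of the four vertices sees exactly the three signs of its incident triangles and misses the opposite one. Choosing any vertex $u$ outside this $K_4$ (possible since $n>5$) and writing the four $K_4$-relations $\sigma(\triangle u pq)+\sigma(\triangle u pr)+\sigma(\triangle u qr)=\sigma(\triangle pqr)$ over the four triples $pqr$ of $\{v,v_i,v_j,v_k\}$, the requirement that $u$ avoid each vertex's missing colour over-determines the six values $\sigma(\triangle u pq)$ and has no solution; thus $u$ sees all four signs, contradicting the assumption. (If no $\Sigma_4^{*}$ exists at all, then no link contains a rainbow triangle, and the same sum argument shows every link already uses all four colours, in agreement with Lemma~\ref{lemma1}.) This establishes the key step.

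Now fix a vertex $v$ whose link $g_v$ on $K_{n-1}$ uses all four colours and extract a path carrying all four. When $n\ge 7$ this is routine: choose one edge of each colour so that the four chosen edges form a linear forest (maximum degree two, no cycle), which is possible because $K_{n-1}$ has at least six vertices, and then extend this linear forest to a Hamiltonian path of $K_{n-1}$. The resulting path carries all four colours, and Lemma~\ref{lemma15} finishes the case.

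The remaining, and main, obstacle is $n=6$. Here $\Sigma_n\setminus v=K_5$, a path has at most four edges, so ``a path carrying all four colours'' means a \emph{rainbow Hamiltonian path} of $K_5$, and this can genuinely fail for an individual link: a monochromatic star at one vertex together with the proper $3$-colouring of the remaining $K_4$ by its three perfect matchings (precisely the configurations governed by the $\Sigma_4^{**}$ normal form of Definition~\ref{def:sigma4-tri-distinct}) admits no rainbow Hamiltonian path. The plan is to not fix $v$ prematurely: the key step in fact produces many vertices seeing all four signs (all vertices outside the exceptional $K_4$), and a short computation shows that whenever one vertex's link is of the exceptional perfect-matching type, the link of an adjacent vertex is not and does carry a rainbow Hamiltonian path. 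Turning this into a clean dichotomy — a finite case analysis of the four-colourings of $K_5$ that can arise as links of a $\Sigma_6$ with four triangle double signs, organized by the $\Sigma_4^{*}/\Sigma_4^{**}$ classification — is where the genuine work lies, and is the step I expect to be the most delicate.
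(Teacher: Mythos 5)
Your reduction to Lemma~\ref{lemma15} and your ``key step'' are sound as far as they go: the argument that some vertex must lie on triangles of all four double signs (the rainbow link triangle forcing a $\Sigma_4^*$, and the six-unknowns/four-relations contradiction for an outside vertex) is correct, and is in fact more general than what the paper establishes in its CASE $\alpha$. The genuine gap is the next step, and it is \emph{not} confined to $n=6$. It is false that a vertex whose link uses all four colours admits, for $n\ge 7$, four representative edges forming a linear forest, or indeed any path carrying all four colours. Concretely: let every edge of $K_n$ have sign $e$ except the three edges of the single triangle $\{v_2,v_3,v_4\}$, which receive the three nonzero signs $a,b,c$. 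This $\Sigma_n$ has all four triangle double signs (e.g.\ $\sigma(\triangle v_2v_3v_5)=a$, and $\sigma(\triangle v_2v_3v_4)=a+b+c=e$). But for any vertex $u\notin\{v_2,v_3,v_4\}$, the link of $u$ is the rainbow triangle $\{v_2,v_3,v_4\}$ plus all remaining edges coloured $e$; each of $a,b,c$ occurs on exactly one edge, those three edges form a triangle, and a path can use at most two edges of a triangle, so \emph{no} path in this link carries more than three colours (and no system of colour representatives is a linear forest, no matter how large $n$ is). The links of $v_2,v_3,v_4$ contain only three colours altogether. Hence for every $n\ge 6$ no vertex of this $\Sigma_n$ allows Lemma~\ref{lemma15} to be invoked, and your proposed repair for $n=6$ (pass to an adjacent vertex whose link is good) fails here as well. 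The conclusion of the lemma still holds for this example---circles using zero or one edge of the special triangle realize $e,a,b,c$---but your method cannot produce them.

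This blocked configuration is precisely what most of the paper's proof is about. In your example the $K_4$ on $\{v_1,v_2,v_3,v_4\}$ is a $\Sigma_4^*$, indeed of the exceptional type $\Sigma_4^{**}$, and every outside vertex attaches to it monochromatically: this is CASE $\beta$, Case 3 of the paper, which is handled by constructions (the necklace of Figure~\ref{K5}, and the circles of Subcases 3a/3b) that do not pass through Lemma~\ref{lemma15} at all; the Key Lemma~\ref{key}, Lemma~\ref{lemma4}, Lemma~\ref{same} and Lemma~\ref{thm11} exist exactly to determine when a Lemma-\ref{lemma15}-style argument is available and what to do when it is not. So the part you deferred as ``the delicate step at $n=6$'' is actually the main body of the argument for all $n>5$, and your proposal does not contain it.
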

\begin{proof}

We consider two cases: CASE $\alpha$ and CASE $\beta$.

    CASE $\alpha$. Suppose that \(\Sigma_n\) does not contain $\Sigma_4^*$. Let $x,y,z$ and $w$ be four distinct double signs. Suppose there is no triangles have double signs $x$ in $\mathcal{B}$. Then, there must exist a triangle with double sign $x$ in $\Sigma_n$ but not in  $\mathcal{B}$. Without loss of generality, say this triangle is $T_{234}$. Let $\phi(T_{123})=\alpha$, where $\alpha\in\{y,z,w\}.$ By Lemma \ref{lemma1}, this $K_4$ only contains distinct double signs $x,\alpha$. Observe that $\phi(T_{123})+\phi(T_{234})=\phi(T_{124})+\phi(T_{134})$, that is $\alpha+x=\phi(T_{124})+\phi(T_{134})$, so one of $\phi(T_{124}) $ and $\phi(T_{134})$ has double sign $x$, a contradiction. Thus, $\mathcal{B}$ contains a triangle with double sign $x$. Similarly,
     it contains signs $y,z,w$. 
     
     Pick four triangles $T_{1ab},T_{1cd},T_{1fg}$ and $T_{1hi}$ with distinct double signs from $\mathcal{B}$. Normalize $v_1$. Then,  the edges $e_{ab},e_{cd},e_{fg}$ and $e_{hi}$ will have distinct double signs, say $x,y,z,w$ respectively. 
     Observe that these four edges can not form a length-4 circles , and any three edges of them can not form a length-3 circle. Suppose these four edges form a length-4 circle $v_2v_3v_4v_5$, as shown in the left panel of the Figure \ref{four1}. 
\vspace{-6pt}
\begin{figure}[H]
    \centering 
\begin{tikzpicture}[scale=4, every node/.style={scale=1}]
    \node[blue] (v2) at (0,1) {$v_2$};
    \node[blue] (v3) at (1,1) {$v_3$};
    \node[blue] (v4) at (1,0) {$v_4$};
    \node[blue] (v5) at (0,0) {$v_5$};
    \node[blue] (v1) at (0.25,0.25) {$v_1$};
    \draw[black] (v2) -- (v3);
    \draw[black] (v3) -- (v4);
    \draw[black] (v4) -- (v5);
    \draw[black] (v5) -- (v2);
    \draw[red]  (v2) -- (v4);
    \draw[black] (v1) -- (v2);
    \draw[black] (v1) -- (v3);
    \draw[black] (v1) -- (v4);
    \draw[black] (v1) -- (v5);
    \draw[black, thick] (v5) -- node[left, orange] {$x$} (v2);
    \draw[black, thick] (v3) -- node[above, orange] {$y$} (v2);
    \draw[black, thick] (v3) -- node[left, orange] {$z$} (v4);
    \draw[black, thick] (v5) -- node[below, orange] {$w$} (v4);

    \begin{scope}[xshift=1.6cm]
        \node[blue] (v2b) at (0,1) {$v_2$};
       
        \node[blue] (v4b) at (1,0) {$v_4$};
        \node[blue] (v5b) at (0,0) {$v_5$};
        \node[blue] (v1b) at (0.25,0.25) {$v_1$};
         \draw[black] (v4b) -- (v5b);
        \draw[black] (v5b) -- (v2b);
        \draw[black]  (v2b) -- (v4b);
        \draw[black] (v1b) -- (v2b);
        \draw[black] (v1b) -- (v4b);
        \draw[black] (v1b) -- (v5b);
     \draw[black, thick] (v5b) -- node[left, orange] {$x$} (v2b);
        \draw[black, thick] (v4b) -- node[above, orange] {$y$} (v2b);
       \draw[black, thick] (v5b) -- node[below, orange] {$w$} (v4b);
    \end{scope}
\end{tikzpicture}

 \caption{}
 \label{four1}
\end{figure}
\vspace{-6pt}

Recall that all the edges incident with $v_1$ have double sign $e$.  If the edge $v_2v_4$ has double sign $x$ or $w$, then the $K_4$ subgraph induced by $\{v_1,v_2,v_3,v_4\}$ will contain four distinct double signs of triangles, a contradiction. Thus, the edge $v_2v_4$ must have double sign $z$ or $y$. But then, the $K_4$ subgraph induces by $\{v_1,v_2,v_5,v_4\}$ will contain four distinct double signs fo triangle, a contradiction again.  Hence, these four edges cannot form a length-4 circle. Suppose three of four edges form a length-3 circle, Without loss of generality, assume their signs are $x,y,w$, as shown on the right panel of the Figure \ref{four1}. However, this $K_4$ contains four distinct double signs of triangles, a contradiction.

From the last two paragraphs, we obtain that these four edges must form a forest. We consider four cases. In each case, it suffices to exhibit a path that uses all four edges with distinct double signs; then, by Lemma~\ref{lemma15}, we conclude that $\Sigma_n$ contains all four possible double signs of Hamiltonian circles.

Case 1. Suppose these four edges share a common vertex $v_2$, as shown in the figure below. 

\begin{figure}[H]
    \centering
\begin{tikzpicture}[scale=3, every node/.style={font=\small}]
  \coordinate (v4) at (0,1);
  \coordinate (v5) at (2,1);
  \coordinate (v3) at (0,0);
  \coordinate (v6) at (2,0);
  \coordinate (v1) at (1,1);   
  \coordinate (v2) at (1,0);   

  \node[above left]  at (v4) {$v_4$};
  \node[above right] at (v5) {$v_5$};
  \node[below left]  at (v3) {$v_3$};
  \node[below right] at (v6) {$v_6$};
  \node[above]       at (v1) {$v_1$};
  \node[below]       at (v2) {$v_2$};

  \draw[black] (v4) -- (v1) -- (v5);           
  \draw[black] (v3) -- (v2) -- (v6);           
  \draw[black] (v1) -- (v2);                   

  \draw[red, very thick] (v4) -- (v3);
  \draw[red, very thick] (v5) -- (v6);

  \draw[red, very thick] (v3) -- node[below, text=red] {$x$} (v2);
  \draw[red, very thick] (v2) -- node[below, text=red] {$w$} (v6);

  \draw[red, very thick] (v4) -- node[pos=0.55, right, text=red] {$y$} (v2);
  \draw[red, very thick] (v5) -- node[pos=0.45, right, text=red] {$z$} (v2);
\draw[black,  thick] (v3) -- node[pos=0.45, right, text=red] {} (v1);
\draw[black,  thick] (v6) -- node[pos=0.45, right, text=red] {} (v1);

\end{tikzpicture}
\label{four2}
 \caption{}
\end{figure}

\vspace{1cm}

Observe the induced subgraph $K_4$ by $\{v_1,v_2,v_3,v_4\}$. Since it can not contain four distinct double signs of triangles, the edge $v_3v_4$ has double sign  either $x$ or $y$. Thus, either the path $p_1:=v_3v_4v_2$ or the path  $p_2:=v_4v_3v_2$ has double sign $x+y$. Similarly, in induced subgraph $K_4$ by $\{v_1,v_2,v_6,v_5\}$, either the path $p_3:=v_2v_5v_6$ or the path $p_4:=v_2v_6v_5$ has double sign $w+z$. Choose one of the path from $p_1,p_2$, choose another path from $p_3,p_4$, joint them by $v_2$ such that the four edges have distinct double signs. 

Case 2. Suppose three edges share a common vertex, and one of these three edge share another vertex with the last edge, as shown in the Figure \ref{four3}. 

\begin{figure}[H]
    \centering
\begin{tikzpicture}[scale=3, every node/.style={font=\small}]
  \coordinate (v4) at (0,1);
  \coordinate (v5) at (3,1);
  \coordinate (v3) at (0,0);
  \coordinate (v6) at (2,0);
  \coordinate (v1) at (1,1);   
  \coordinate (v2) at (1,0);   

  \node[above left]  at (v4) {$v_4$};
  \node[above right] at (v5) {$v_5$};
  \node[below left]  at (v3) {$v_3$};
  \node[below right] at (v6) {$v_6$};
  \node[above]       at (v1) {$v_1$};
  \node[below]       at (v2) {$v_2$};

  \draw[black] (v4) -- (v1) -- (v5);           
  \draw[black] (v3) -- (v2) -- (v6);           
  \draw[black] (v1) -- (v2);                   

  \draw[green, very thick] (v4) -- (v3);
  \draw[red, very thick] (v5) -- (v6);

  \draw[red, very thick] (v3) -- node[below, text=red] {$x$} (v2);
  \draw[red, very thick] (v2) -- node[below, text=red] {$w$} (v6);

  \draw[red, very thick] (v4) -- node[pos=0.55, right, text=red] {$y$} (v2);
  \draw[red, very thick] (v5) -- node[pos=0.45, right, text=red] {$z$} (v6);
\draw[black,  thick] (v3) -- node[pos=0.45, right, text=red] {} (v1);
\draw[black,  thick] (v6) -- node[pos=0.45, right, text=red] {} (v1);

\end{tikzpicture}

 \caption{}
 \label{four3}
\end{figure}

\vspace{1cm}

Observe that the edges $v_4v_2,v_3v_2,v_2v_6$ share a common vertex and have distinct double signs $y,x,w$. Since the induced subgraph $K_4$ by $\{v_1,v_2,v_3,v_4\}$ does not contain four double signs of triangles, $\sigma(v_3v_4)=x$ or $y$. Thus, either the path $v_4v_3v_2v_6v_5$ or the path $v_3v_4v_2v_6v_5$ contains four double signs of edges.

Case 3. Suppose three edges share a common vertex, and they do not share  vertices  with last edge. Use the similar reasoning as in case 2 find a path with length-3 that contains three distinct double signs of edges, then connect this path with the last edge to form a new path. Thus, this new path contains four distinct double signs of edges.

     Case 4.  Suppose that, among these four edges, no vertex is incident with more than one of the other edges. If the four edges already form a length-4 path, we are done. Otherwise, they decompose into disjoint paths; connect these paths to obtain a longer path that uses all four edges, which therefore contains all four edge double signs.

 \vspace{1cm}

   CASE $\beta$. Suppose that \(\Sigma_n\)  contains $\Sigma_4^*$. We need the following lemmas to prove that \(\Sigma_n\)  contains all possible double signs of Hamiltonian circles.
    \end{proof}

\begin{lemma}\label{11}
Let \(y_1,y_2\in\mathbb F_2^2\) be distinct nonzero elements, and let \(z_1,z_2\in\mathbb F_2^2\) be distinct elements such that exactly one of \(y_1,y_2\) equals one of \(z_1,z_2\).
Then \(\{\,y_i+z_j: i,j\in\{1,2\}\,\}=\mathbb F_2^2\).

\end{lemma}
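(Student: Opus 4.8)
The plan is to exploit that $\mathbb{F}_2^2$ has exactly four elements, so the set $\{\,y_i+z_j : i,j\in\{1,2\}\,\}$ — which contains at most four values — equals all of $\mathbb{F}_2^2$ precisely when the four sums $y_1+z_1,\ y_1+z_2,\ y_2+z_1,\ y_2+z_2$ are pairwise distinct. Thus the lemma reduces to showing these four sums are distinct, and I would organize the argument around the three types of pairwise comparisons between them.

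First I would dispose of the ``easy'' comparisons. Two sums sharing the same $y$-index, say $y_1+z_1$ and $y_1+z_2$, differ by $z_1+z_2$, which is nonzero since $z_1\neq z_2$; hence they are distinct, and likewise for the pair with common index $y_2$. Two sums sharing the same $z$-index differ by $y_1+y_2\neq e$ (as $y_1\neq y_2$), so they too are distinct. This leaves only the two ``diagonal'' comparisons $y_1+z_1$ versus $y_2+z_2$ and $y_1+z_2$ versus $y_2+z_1$; in the Klein four-group both equalities are equivalent to the single condition $y_1+y_2=z_1+z_2$. So the entire problem collapses to proving $y_1+y_2\neq z_1+z_2$.

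To establish $y_1+y_2\neq z_1+z_2$ I would invoke the hypotheses directly. Since $y_1,y_2$ are distinct and nonzero, their sum $y_1+y_2$ is the unique third nonzero element, so $\mathbb{F}_2^2\setminus\{y_1,y_2\}=\{e,\,y_1+y_2\}$. The ``exactly one coincidence'' hypothesis lets me assume, after relabeling, that $y_1=z_1$ while $z_2\notin\{y_1,y_2\}$, whence $z_2\in\{e,\,y_1+y_2\}$. If $z_2=e$ then $z_1+z_2=y_1\neq y_1+y_2$ (because $y_2\neq e$); if $z_2=y_1+y_2$ then $z_1+z_2=y_2\neq y_1+y_2$ (because $y_1\neq e$). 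Either way $z_1+z_2\in\{y_1,y_2\}$, which differs from $y_1+y_2$, finishing the argument.

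There is no serious obstacle: the substance is the observation that both off-diagonal coincidences amount to the single equation $y_1+y_2=z_1+z_2$, after which a short case check on where the lone coincidence occurs closes the proof. The only point demanding care is bookkeeping the hypothesis ``exactly one of $y_1,y_2$ equals one of $z_1,z_2$'' — specifically confirming that the non-coinciding $z$ lands in $\{e,\,y_1+y_2\}$ rather than in $\{y_1,y_2\}$ — and checking that the relabeling used to set $y_1=z_1$ loses no generality, which follows from the symmetry of the statement in the two index sets.
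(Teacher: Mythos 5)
Your proposal is correct. It differs from the paper's proof in its organization, though the final computation coincides. The paper argues directly: after the same WLOG step ($y_1=z_1$, hence $z_2\in\{e,\,y_1+y_2\}$), it explicitly computes all four sums in each of the two cases and observes that they exhaust $\mathbb{F}_2^2$. You instead first reduce the claim to a single inequality: since $\mathbb{F}_2^2$ has four elements, the conclusion is equivalent to the four sums being pairwise distinct, and the only comparisons not killed outright by $y_1\neq y_2$ and $z_1\neq z_2$ are the two diagonal ones, both equivalent to $y_1+y_2=z_1+z_2$. Only then do you run the same two-case check to rule that equality out. Your reduction buys something the paper's computation does not make visible: it shows that, given $y_1\neq y_2$ and $z_1\neq z_2$, the set of sums is all of $\mathbb{F}_2^2$ \emph{exactly when} $y_1+y_2\neq z_1+z_2$, and that the ``exactly one coincidence'' hypothesis serves only to force this inequality. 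This also explains the companion Lemma~\ref{12} in the paper, where $\{z_1,z_2\}=\{y_1,y_2\}$ or its complement is precisely the situation $z_1+z_2=y_1+y_2$, producing the degenerate multiset $\{x,x,y,y\}$; your criterion unifies the two lemmas, whereas the paper proves each by separate enumeration.
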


\begin{proof}
Let $y_1, y_2, y_3$ be the three distinct non-identity elements of $\mathbb{F}_2^2$.  
Note that $y_3 = y_1+ y_2$.  
Without loss of generality, assume $y_1 = z_1$.  
Then $z_2$ is either $e$ or $y_3$.

\textbf{Case 1:} $z_2 = e$.  
Then $y_1 +z_2 = y_1$, $y_2 +z_2 = y_2$, $y_1+ z_1 = e$, and $y_2+z_1 = y_2+ y_1 = y_3$.  
Thus $\{y_i +z_j\}_{i,j} = \mathbb{F}_2^2$.

\textbf{Case 2:} $z_2 = y_3$.  
Then $y_1+z_2 = y_2$, $y_2+ z_2 = y_1$, $y_1+ z_1 = e$, and $y_2 +z_1 = y_3$.  
Thus $\{y_i +z_j\}_{i,j} = \mathbb{F}_2^2$.

In either case, the conclusion follows.
\end{proof}

\begin{lemma}\label{12} 
Let $y_1, y_2 \in \mathbb{F}_2^2$ be distinct non-identity elements, and let $z_1, z_2 \in \mathbb{F}_2^2$ be distinct elements such that 
\[
\{z_1, z_2\} = \{y_1, y_2\} \quad \text{or} \quad \{z_1, z_2\} = \mathbb{F}_2^2 \setminus \{y_1, y_2\}.
\]
Then $\{y_i +z_j\}_{i,j}$ is equal to the multiset $\{x, x, y, y\}$ for some $x, y \in \mathbb{F}_2^2$.
\end{lemma}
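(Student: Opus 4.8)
The plan is to observe that both alternatives in the hypothesis force the single algebraic identity $y_1+y_2=z_1+z_2$, after which the multiset structure follows at once from working in characteristic $2$. Writing $e=(0,0)$ for the identity, recall that the four elements of $\mathbb{F}_2^2$ sum to $e$, since $e+y_1+y_2+(y_1+y_2)=e$. First I would dispose of the two cases of the hypothesis in one stroke: if $\{z_1,z_2\}=\{y_1,y_2\}$, then trivially $z_1+z_2=y_1+y_2$; and if $\{z_1,z_2\}=\mathbb{F}_2^2\setminus\{y_1,y_2\}$, then $y_1+y_2+z_1+z_2$ is the sum of all four elements of $\mathbb{F}_2^2$, which equals $e$, so again $z_1+z_2=y_1+y_2$. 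Thus in both cases the same identity holds, and the proof need not split further.

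From $y_1+y_2=z_1+z_2$ I would then read off the two coincidences among the four sums. Since $(y_1+z_1)+(y_2+z_2)=(y_1+y_2)+(z_1+z_2)=e$, we get $y_1+z_1=y_2+z_2$; and likewise $(y_1+z_2)+(y_2+z_1)=e$ gives $y_1+z_2=y_2+z_1$. Setting $x:=y_1+z_1$ and $y:=y_1+z_2$, the four products $\{y_i+z_j\}$ therefore collapse to the multiset $\{x,x,y,y\}$. To confirm the form is genuine rather than degenerate, note that $x=y$ would force $z_1=z_2$, contradicting the assumed distinctness of $z_1,z_2$; hence $x\ne y$.

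I expect no real obstacle here, since the statement is a finite verification over $\mathbb{F}_2^2$. The only genuine idea is recognizing that the two superficially different hypotheses both encode the identity $z_1+z_2=y_1+y_2$, which permits a unified argument and avoids the explicit case split used in Lemma~\ref{11}; should a reader prefer, the conclusion can equally be obtained by brute force, fixing $\{y_1,y_2\}$ up to symmetry and tabulating the four sums in each alternative exactly as in that lemma. The one point deserving a moment's care is the distinction between multiset and set, namely verifying that each of $x$ and $y$ occurs \emph{exactly twice}, rather than merely concluding that $\{y_i+z_j\}$ has two distinct elements.
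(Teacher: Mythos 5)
Your proof is correct, but it takes a genuinely different route from the paper. The paper proves the lemma by explicit case analysis: in the case $\{z_1,z_2\}=\{y_1,y_2\}$ it sets $z_1=y_1$, $z_2=y_2$ and tabulates the four sums to get $\{e,e,y_1+y_2,y_1+y_2\}$; in the case $\{z_1,z_2\}=\mathbb{F}_2^2\setminus\{y_1,y_2\}$ it identifies $\{z_1,z_2\}=\{y_3,e\}$ with $y_3=y_1+y_2$ and again computes all four sums directly, obtaining $\{y_1,y_1,y_2,y_2\}$. You instead extract from both hypotheses the single identity $z_1+z_2=y_1+y_2$ (using that the four elements of $\mathbb{F}_2^2$ sum to $e$ in the complementary case), and then derive the two coincidences $y_1+z_1=y_2+z_2$ and $y_1+z_2=y_2+z_1$ by pairing sums whose total is $e$. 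This buys you a unified argument with no case split, and it actually proves a slightly stronger statement: the conclusion holds under the weaker hypothesis $z_1+z_2=y_1+y_2$ alone, and in any elementary abelian $2$-group $\mathbb{F}_2^k$, not just $\mathbb{F}_2^2$. What the paper's computation buys in exchange is that it exhibits the multiset values explicitly (e.g.\ $\{e,e,y_1+y_2,y_1+y_2\}$), which is occasionally convenient when the lemma is invoked. Your final observation that $x\neq y$ (since $x=y$ would force $z_1=z_2$) is a harmless strengthening not required by the statement, and your remark about multiset versus set is exactly the right point of care; both proofs handle it correctly.
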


\begin{proof}
First, assume $\{z_1, z_2\} = \{y_1, y_2\}$.  
Without loss of generality, let $z_1 = y_1$ and $z_2 = y_2$. Then
\[
\{y_i +z_j : 1 \le i, j \le 2\} = \{y_1+y_1,\, y_1+y_2,\, y_2+y_1,\, y_2+y_2\}.
\]
Observe that $y_1+y_1 = y_2+y_2 = e$ and $y_1+y_2 = y_2+y_1$.  
Thus the multiset is $\{x, x, y, y\}$ for some $x, y \in \mathbb{F}_2^2$.

Now assume $\{z_1, z_2\} = \mathbb{F}_2^2 \setminus \{y_1, y_2\}$.  
Since $\mathbb{F}_2^2$ has three distinct non-identity elements $y_1, y_2, y_3$ with $y_3 = y_1+y_2$, we have $\{z_1, z_2\} = \{y_3, e\}$.  
The set of products is then
\[
\{y_1+y_3,\, y_1+e,\, y_2+y_3,\, y_2+e\}.
\]
Note that $y_1+y_3 = y_2+e$ and $y_1+e = y_2+y_3$, so again the multiset is $\{x, x, y, y\}$ for some $x, y \in \mathbb{F}_2^2$.
\end{proof}

Let $e$ be the identity element of $\mathbb{F}_2^2$, and let $a,b,c$ be three distinct non-identity elements of $\mathbb{F}_2^2$. 

\textbf{Convention.}
Whenever we refer to \(\Sigma_4^*\), we fix a labeled copy with vertex set
\(\{v_1,v_2,v_3,v_4\}\) and triangle double signs
\[
\sigma(T_{123})=a,\qquad \sigma(T_{124})=b,\qquad
\sigma(T_{134})=c,\qquad \sigma(T_{234})=e .
\]
This is without loss of generality: any \(\Sigma_4^*\) is isomorphic to this one
by relabeling the vertices, and our arguments depend only on the fact that the
four triangle signs are distinct.

 There are 12 distinct Hamiltonian paths in $\Sigma_4^*$ that split into three groups of four: Group 1 consists of Hamiltonian paths from $v_1$ to $v_2$ or from $v_3$ to $v_4$; Group 2 consists of Hamiltonian paths from $v_1$ to $v_4$ or from $v_2$ to $v_3$; and Group 3 consists of Hamiltonian paths from $v_1$ to $v_3$ or $v_2$ to $v_4$.

\begin{lemma}[Key Lemma]\label{key}
In $\Sigma_4^*$, the number of Hamiltonian paths with double sign $g$ is even for each $g\in \mathbb{F}_2^2$. 

\end{lemma}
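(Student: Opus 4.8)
The plan is to encode the Hamiltonian paths of $\Sigma_4^*$ through the three perfect matchings of $K_4$ and then read off their double signs from the matching sums. Write $M_1=\{v_1v_2,v_3v_4\}$, $M_2=\{v_1v_3,v_2v_4\}$, $M_3=\{v_1v_4,v_2v_3\}$, and set $m_k:=\sum_{e\in M_k}\sigma(e)$. Every Hamiltonian path of $K_4$ consists of both edges of exactly one matching $M_i$ together with a single ``middle'' edge $e$ drawn from another matching $M_k$ (with $k\neq i$); its double sign is then $\sigma(P)=m_i+\sigma(e)$. First I would verify that this decomposition is a bijection between the $12$ paths and the pairs $(M_i,e)$ with $e\notin M_i$, and check that the three groups in the statement are exactly the paths whose middle edge lies in a fixed matching $M_k$ (a path with endpoints in one edge of $M_k$ has its middle edge on the other edge of $M_k$). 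Consequently the four double signs of the paths in group $k$ form the sum-set $\{m_i+\sigma(e):i\neq k,\ e\in M_k\}$, a set of the shape $\{y_i+z_j\}_{i,j\in\{1,2\}}$ to which Lemmas~\ref{11} and~\ref{12} apply.

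The heart of the argument is to pin down the three matching sums. Here I would use that the double sign of the $4$-cycle $M_i\triangle M_j$ equals the sum of the double signs of the two triangles its chord splits it into; since the four triangle signs of $\Sigma_4^*$ are $a,b,c,e$ and sum to $e$, each $m_i+m_j$ is one of the three nonzero elements $a,b,c$. Solving the resulting system, together with $S:=m_1+m_2+m_3=\sum_{e\in E(K_4)}\sigma(e)$, yields $m_k=S+(\text{the nonzero element not equal to }m_i+m_j)$ and hence the clean identity $\{m_1,m_2,m_3\}=\mathbb{F}_2^2\setminus\{S\}$. In particular the three matching sums are always distinct, and exactly one of them equals $e$ precisely when $S\neq e$.

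With this in hand I would classify each group. For group $k$ put $\alpha:=(\text{sum of the two relevant matching sums})=S+m_k$ and $\beta:=(\text{sum of the two edges of }M_k)=m_k$; the sum-set $\{y_i+z_j\}$ equals $\mathbb{F}_2^2$ when $\alpha,\beta$ are nonzero and distinct, and has the form $\{x,x,y,y\}$ otherwise, which is exactly the dichotomy recorded in Lemmas~\ref{11} and~\ref{12}. Since $m_k\neq S$ we always have $\alpha\neq e$. If $S=e$, then $\alpha=\beta=m_k\neq e$ for every $k$, so by Lemma~\ref{12} all three groups give multisets $\{x,x,y,y\}$. If $S\neq e$, then the unique group with $m_k=e$ has $\beta=e$ and again yields $\{x,x,y,y\}$ by Lemma~\ref{12}, while the other two groups have $\alpha,\beta$ nonzero and distinct (as $S\neq e$) and yield all of $\mathbb{F}_2^2$ by Lemma~\ref{11}.

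To conclude, I would observe that a group of type $\{x,x,y,y\}$ contributes an even number of paths to every double sign, and that the groups of type $\mathbb{F}_2^2$ occur in even number ($0$ or $2$), so together they contribute $0$ or $2$ to each double sign; hence every $g\in\mathbb{F}_2^2$ is realized by an even number of Hamiltonian paths. The main obstacle I anticipate is establishing the matching-sum identity $\{m_1,m_2,m_3\}=\mathbb{F}_2^2\setminus\{S\}$ and, relatedly, confirming that the hypotheses of Lemmas~\ref{11} and~\ref{12} are genuinely met in every configuration — including degenerate ones where an individual edge sign or matching sum happens to equal $e$; the $\alpha,\beta$ reformulation above is precisely what makes this step case-free.
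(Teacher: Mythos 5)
Your decomposition is the same one the paper uses: the twelve Hamiltonian paths split into three groups of four according to which perfect matching contains the middle edge (equivalently, which matching edge joins the endpoints), each group's signs form a sum-set $\{y_i+z_j\}$, and the parity conclusion follows from the dichotomy ``all four distinct'' versus ``$\{x,x,y,y\}$.'' What your version buys is organization: the identity $\{m_1,m_2,m_3\}=\mathbb{F}_2^2\setminus\{S\}$, with $S$ the sum of all six edge signs, compresses the paper's four-case Table~1 into a single clean statement (the paper's Cases 1--3 are exactly $S\in\{a,b,c\}$, and its Case 4 is $S=e$), and your count ``two full groups when $S\neq e$, zero when $S=e$'' matches the table's agree/not-agree pattern exactly.

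There is, however, a genuine (though repairable) gap, and it is created precisely by your choice of parametrization. You write each path sign as $m_i+\sigma(e)$, so your $y$'s are matching sums. When $S\neq e$, one matching sum, say $m_{k_0}$, equals $e$; then for each of the two groups $k\neq k_0$ --- the very groups you claim realize all of $\mathbb{F}_2^2$ --- the pair $\{y_1,y_2\}=\{m_i: i\neq k\}$ contains the identity, so the hypothesis of Lemma~\ref{11} that $y_1,y_2$ be distinct \emph{nonzero} elements fails and that lemma cannot be cited. (Similarly, for the group $k_0$ itself one has $\beta=m_{k_0}=e$, i.e.\ $z_1=z_2$, so Lemma~\ref{12}'s requirement that $z_1,z_2$ be distinct also fails, though there the conclusion $\{x,x,y,y\}$ is trivial.) The paper dodges this by parametrizing each path as a Hamiltonian cycle minus a deleted edge, so its $y$'s are the cycle signs $a,b,c$ --- always nonzero --- and Lemmas~\ref{11} and~\ref{12} apply as stated. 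Your $\alpha,\beta$ dichotomy is true, but it is not ``exactly the dichotomy recorded in Lemmas~\ref{11} and~\ref{12}''; it must be proved directly. Fortunately that is a two-line check in $\mathbb{F}_2^2$: given $y_1\neq y_2$, we have $y_1+z_1=y_1+z_2$ iff $\beta=e$, and $y_1+z_1=y_2+z_2$ iff $\alpha=\beta$, so the four sums are pairwise distinct iff $\beta\neq e$ and $\alpha\neq\beta$, and otherwise they collapse into two equal pairs $\{x,x,y,y\}$. Insert that verification in place of the citations and your proof is complete.
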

\begin{proof}

Let $v_1,v_2,v_3,v_4$ be the vertices of $\Sigma_4^*$ as shown in Figure below.  

\vspace{1cm}

\begin{figure}[H]
    \centering
\begin{tikzpicture}[scale=2, every node/.style={scale=1}]
    \node[circle, draw=none, fill=purple!20, inner sep=2pt, label=left:{$v_1$}] (v1) at (0,1) {};
    \node[circle, draw=none, fill=purple!20, inner sep=2pt, label=right:{$v_2$}] (v2) at (2,1) {};
    \node[circle, draw=none, fill=purple!20, inner sep=2pt, label=right:{$v_3$}] (v3) at (2,0) {};
    \node[circle, draw=none, fill=purple!20, inner sep=2pt, label=left:{$v_4$}] (v4) at (0,0) {};
    
    \draw (v1) -- node[above] {$e_{12}$} (v2);
    \draw (v2) -- node[right] {$e_{23}$} (v3);
    \draw (v3) -- node[below] {$e_{34}$} (v4);
    \draw (v4) -- node[left] {$e_{14}$} (v1);
    \draw (v1) -- (v3);
    \draw (v2) -- (v4);
\end{tikzpicture}
\label{key2}
 \caption{}
\end{figure}

\vspace{1cm}

Denote three Hamiltonian circles in $\Sigma_4^*$ as
$C_1 ($The circle $v_1v_2v_3v_4$), $C_2 (v_1v_2v_4v_3)$, $C_3 (v_1v_3v_2v_4)$. The double signs of  $C_1$   is $\sigma(C_1)=\sigma(T_{123})+\sigma(T_{134})=a+c=b$. Similarly, $\sigma(C_2)=a$  and $\sigma(C_3)=c$.  There are three pairs of disjoint edges: $e_{12}$ and $e_{34}$ (Both edges in $C_1,C_2$), $e_{14}$ and $e_{23}$ (Both edges in $C_1,C_3$), $e_{13}$ and $e_{24}$ (Both edges in $C_3,C_2$).  Consider disjoint edges $e_{12}$ and $e_{34}$.
 There are two Hamiltonian paths $v_1v_4v_3v_2$ and $v_1v_3v_4v_2$ starting at $v_1$ and ending at $v_2$. Similarly, there are another two Hamiltonian paths $v_4v_1v_2v_3$ and $v_4v_2v_1v_3$ starting at $v_4$ and ending at $v_3$. These four paths are in group 1. 
The double signs of 
 the path $v_1v_4v_3v_2$ is $\sigma(v_1v_4v_3v_2)=\sigma(C_1)+\sigma(e_{12})$. Likewise, $\sigma(v_1v_3v_4v_2)=\sigma(C_2)+\sigma(e_{12})$, $\sigma(v_4v_1v_2v_3)=\sigma(C_1)+\sigma(e_{34})$,  and
$\sigma(v_4v_2v_1v_3)=\sigma(C_2)+\sigma(e_{34})$.

Notice that $\sigma(e_{12})+\sigma(e_{34})$ has four possible double signs $a,b,c,$ and $e$. We will consider them in four cases:

Case 1.  $\sigma(e_{12})+\sigma(e_{34})=a$.
Since $\sigma(C_1)=\sigma(e_{12})+\sigma(e_{34})+\sigma(e_{23})+\sigma(e_{14})=b$, 
it will force $\sigma(e_{23})+\sigma(e_{14})=c$. Since $\sigma(C_2)=a$, it forces $\sigma(e_{13})+\sigma(e_{24})=e$. (see the table 1 below).
Observe that edges $e_{12}$ and $e_{34}$ are in $C_1$ and $C_2$.
Since $\sigma(e_{12})+\sigma(e_{34})$ has the same double sign to one of $\sigma(C_1)(=b)$ or $\sigma(C_2)(=a)$, we say that the value $\sigma(e_{12})+\sigma(e_{34})$ \textcolor{blue}{agrees} on one of $\sigma(C_1)$ or $\sigma(C_2)$ (see the table 1 below). 
Since $\sigma(e_{12})+\sigma(e_{34})=a$, one possibility is that $\sigma(e_{12})=a$ and $\sigma(e_{34})=e$ (or  $\sigma(e_{34})=a$ and $\sigma(e_{12})=e$, that will lead to the same result); alternatively, we could have $\sigma(e_{12})=b$ and $\sigma(e_{34})=c$ (or  $\sigma(e_{34})=b$ and $\sigma(e_{12})=c$, that will provide the same result).  Now,  we want to apply Lemma \ref{11}, by setting $y_1=\sigma(C_1), y_2=\sigma(C_2), z_1=\sigma(e_{12}), z_2=\sigma(e_{34})$. Then, 
by Lemma \ref{11}, the double signs of the four Hamiltonian paths $v_1v_4v_3v_2, v_1v_3v_4v_2, v_4v_1v_2v_3, v_4v_2v_1v_3$ will be all different, i.e. double signs of $a,b,c,e$. 

Next,  $\sigma(e_{14})+\sigma(e_{23})=\sigma(C_1)+\sigma(e_{12})+\sigma(e_{34})=b+a=c$, this double sign agrees with $\sigma(C_1)$ or $\sigma(C_3)$. There are four possible assignments for $\sigma(e_{14})$ and $\sigma(e_{23})$: 

 $\sigma(e_{14})=a, \sigma(e_{23})=b$ ;
 
$\sigma(e_{14})=b, \sigma(e_{23})=a$;

 $\sigma(e_{14})=c, \sigma(e_{23})=e$;
 
 $\sigma(e_{14})=e, \sigma(e_{23})=c$.

\vspace{0.5cm}
Set $y_1=\sigma(C_1)$, $y_2=\sigma(C_3), z_1=\sigma(e_{14})$, and $z_2=\sigma(e_{23})$. By Lemma \ref{11}, four Hamiltonian paths in group 2 will have distinct double signs.

Finally, we want to check the double signs of four Hamiltonian paths in group 3. 
$\sigma(e_{13})+\sigma(e_{24})=\sigma(C_2)+\sigma(e_{12})+\sigma(e_{34})=a+a=e$, this double sign is not the same as $\sigma(C_2)$ or $\sigma(C_3)$, we say it does  \textcolor{blue}{not agrees} with $\sigma(C_2)$ or $\sigma(C_3)$. 
Then, $\sigma(e_{13})=\sigma(e_{24})=k$, for some $k\in \mathbb{F}_2^2$. The double signs of the four Hamiltonian paths in group 3 are  $\sigma(C_2)+\sigma(e_{13})=a+k$, $\sigma(C_2)+\sigma(e_{24})=a+k, \sigma(C_3)+\sigma(e_{13})=c+k$, and $\sigma(C_3)+\sigma(e_{24})=c+k$.  

(\textbf{Note.} The reader can verify that, in certain cases marked ``not agree,'' one may apply Lemma~\ref{12}.)

Observe that the number of the 12 distinct Hamiltonian paths above with double sign $g$ is
even for each $g\in \mathbb{F}_2^2$.

For Cases~2, 3, and~4, we summarize the results in Table~1 below.  
When the entry reads “agree,” the four Hamiltonian paths in that group have distinct double signs.  
When the entry reads “not agree,” the four Hamiltonian paths in that group have double signs \(s,s,t,t\),  
where \(s,t \in \mathbb{F}_2^2\) are distinct.  
In each case, the number of Hamiltonian paths with a given double sign \(g\) is even for every \(g \in \mathbb{F}_2^2\).

\begin{center}
\begin{tabular}{||c| c| c ||} 
 \hline
Given values & responded values & responded  values  \\ [0.5ex] 
 \hline
 $\sigma(e_{12})+\sigma(e_{34})$ & $\sigma(e_{14})+\sigma(e_{23})$ & $\sigma(e_{13})+\sigma(e_{24})$  \\ 
 
 \hline\hline
 Case 1:  a (agree)& c (agree)& e (not agree)\\
 \hline
 Case 2: e (not agree)& b (agree)& a (agree) \\
 \hline
Case 3: b (agree)& e (not agree)& c (agree)\\
 \hline
Case 4:  c (not agree)& a (not agree)& b (not agree)\\ [1ex] 
 \hline \hline
 $e_{12},e_{34}$ are on $C_1,C_2$.  & $e_{14}$,$e_{23}$ are on $C_1,C_3$.& $e_{13},e_{24}$ are on $C_2,C_3$.\\ [1ex] 
  $\sigma(C_1)=b$ and $\sigma(C_2)=a$ & $\sigma(C_1)=b$ and $\sigma(C_3)=c$& $\sigma(C_2)=a$ and $\sigma(C_3)=c$\\ [1ex]

  \hline
\end{tabular}
\end{center}
\begin{center}
    Table 1
\end{center}
\end{proof}

\vspace{2cm}

In $\Sigma_4^*$, for each $i\in\{1,2,3,4\}$, let $\mathscr{P}_i$ denote the set of the six Hamiltonian paths that start at $v_i$. We denote by $\sigma(\mathscr{P}_i)$ the multiset of double signs corresponding to these Hamiltonian paths.

\begin{lemma}\label{lemma4}
For each \(i\in\{1,2,3,4\}\), the multiset of double signs of the six Hamiltonian paths
\(\mathscr{P}_i\) starting at \(v_i\) is either
\(\{p,p,q,q,s,s\}\) or \(\{p,q,s,t,t,t\}\) for some distinct
\(p,q,s,t\in\mathbb F_2^2\).
\end{lemma}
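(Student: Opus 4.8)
The plan is to read off the multiset $\sigma(\mathscr P_i)$ not through the three \emph{groups} of the Key Lemma (Lemma~\ref{key}), but through a finer pairing of the six paths by their second endpoint, and then to pin the multiset down with a single additive invariant. I would fix $i=1$ (the other starting vertices are handled by the symmetry noted below) and split $\mathscr P_1$ into three pairs according to the opposite endpoint $v_m$, $m\in\{2,3,4\}$. A Hamiltonian path of $K_4$ with endpoints $v_1,v_m$ is exactly a Hamiltonian circle with the edge $e_{1m}$ deleted, and precisely two of the circles $C_1,C_2,C_3$ contain $e_{1m}$: an edge lies in a circle unless it is a diagonal of that $4$-cycle, and $e_{1m}$ is the diagonal of exactly one of the three circles. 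Writing $p=\sigma(e_{1m})$ and letting $C,C'$ be the two circles through $e_{1m}$, the two paths of this pair have double signs $\sigma(C)+p$ and $\sigma(C')+p$. Since $\sigma(C_1)=b,\ \sigma(C_2)=a,\ \sigma(C_3)=c$ are distinct, the two entries of each pair are distinct, so $\sigma(\mathscr P_1)$ is a genuine six-element multiset in which each value occurs at most three times.

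The crux is an additive invariant. The group sum (in $\mathbb F_2^2$) of the two entries at $v_m$ is $\bigl(\sigma(C)+p\bigr)+\bigl(\sigma(C')+p\bigr)=\sigma(C)+\sigma(C')$, i.e.\ the sign of the single circle omitted at $e_{1m}$. As $m$ ranges over $\{2,3,4\}$ the three incident edges $\{v_1,v_m\}$ are distinct $2$-subsets, hence diagonals of distinct circles, so the omitted circles are $C_1,C_2,C_3$ in some order. Therefore the group sum of all six entries of $\sigma(\mathscr P_1)$ equals $\sigma(C_1)+\sigma(C_2)+\sigma(C_3)=a+b+c=e$. Equivalently, the set of values that occur an \emph{odd} number of times in $\sigma(\mathscr P_1)$ must sum to $e$.

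It remains to combine this invariant with the two counting facts ``total multiplicity $6$'' and ``each multiplicity $\le 3$''. The multiplicity patterns of $6$ over four values with all parts $\le 3$ are $(3,3,0,0),\ (3,2,1,0),\ (3,1,1,1),\ (2,2,2,0),\ (2,2,1,1)$. In the first, second and last of these, exactly two of the four (pairwise distinct) values occur an odd number of times, and two distinct elements of $\mathbb F_2^2$ never sum to $e$; so the invariant rules these three out. The survivors are $(2,2,2,0)$, with empty odd-support (sum $e$), and $(3,1,1,1)$, whose odd-support is all of $\mathbb F_2^2$ and hence also sums to $e$. These are precisely the shapes $\{p,p,q,q,s,s\}$ and $\{p,q,s,t,t,t\}$ claimed. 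The entire argument used only that the three circle signs are the three distinct nonidentity elements and that each edge at the base vertex omits a distinct circle; both statements hold verbatim with $v_1$ replaced by any $v_i$, so the conclusion is uniform in $i$.

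The main obstacle is conceptual rather than computational: one must recognize that $\mathscr P_i$ is a transversal consisting of one pair from each of the three groups of Lemma~\ref{key}, so that Lemmas~\ref{11} and~\ref{12} (which govern whole groups through sums of the form $y_i+z_j$) do not apply, and that the correct tool is instead the group-sum invariant $a+b+c=e$. The one genuinely deceptive point is the pattern $(2,2,1,1)$: it is compatible with both ``total $6$'' and ``parts $\le 3$'' and is excluded only by the invariant. Once the pairing and the invariant are in place the remaining deduction is immediate.
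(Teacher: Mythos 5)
Your proof is correct, and it reaches the conclusion by a genuinely different mechanism than the paper. Both arguments begin identically: the six paths of $\mathscr{P}_1$ are the circles $C_1,C_2,C_3$ (of distinct signs $b,a,c$) with one of the edges $e_{12},e_{13},e_{14}$ deleted, grouped into pairs by the deleted edge, and in both proofs the two entries of each pair are distinct because the two circles through a fixed edge have distinct signs (this is also your source of the bound that each value has multiplicity at most $3$). The paper then works structurally: a pigeonhole argument on the possible values of an edge sign shows that between any two of the three pairs at least one cross-coincidence occurs, a second argument shows at most one occurs, and the two admissible shapes drop out of a case split on whether some value occurs three times. You replace all of that with a single additive invariant: each pair sums to the sign of the circle omitted at the corresponding edge, and since the three edges at $v_1$ are diagonals of the three distinct circles, the six entries sum to $a+b+c=e$, forcing the set of odd-multiplicity values to sum to $e$; combined with multiplicity at most $3$, this eliminates $(3,3,0,0)$, $(3,2,1,0)$, and the genuinely tempting $(2,2,1,1)$, leaving exactly $(2,2,2,0)$ and $(3,1,1,1)$, i.e.\ $\{p,p,q,q,s,s\}$ and $\{p,q,s,t,t,t\}$. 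Your route is shorter, avoids the coincidence casework, and explains conceptually why precisely these two shapes survive (they are the ones whose odd-support sums to the identity); the paper's route yields finer local information --- exactly one coincidence between each pair of braces --- which tells you where the repeated values sit, not merely the shape of the multiset. Your closing observation is also apt: $\mathscr{P}_i$ is a transversal of the three groups in Lemma~\ref{key}, one pair from each, so Lemmas~\ref{11} and~\ref{12} indeed cannot be applied to it directly.
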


\begin{proof}
By symmetry it suffices to take \(i=1\).
Assume (WLOG for \(\Sigma_4^*\)) that
\(\sigma(T_{123})=a\), \(\sigma(T_{124})=b\), \(\sigma(T_{134})=c\), and
\(\sigma(T_{234})=e\) (the identity). 

Let \(x=\sigma(e_{12})\), \(y=\sigma(e_{13})\), \(z=\sigma(e_{14})\).
The six Hamiltonian paths from \(v_1\) arise by deleting one of the three
edges \(e_{12},e_{13},e_{14}\) from one of the Hamiltonian cycles
\(C_1=v_1v_2v_3v_4\), \(C_2=v_1v_2v_4v_3\), \(C_3=v_1v_3v_2v_4\).
Since \(\sigma(C_1)=b\), \(\sigma(C_2)=a\), \(\sigma(C_3)=c\) and $\sigma(C_i\backslash e_{jk})=\sigma(C_i)+\sigma(e_{jk})$,
the six paths   \(C_{2}\setminus e_{12}\), \(C_{1}\setminus e_{12}\),
\(C_{2}\setminus e_{13}\),  \(C_{3}\setminus e_{13}\), $C_1\backslash e_{14},$ and $ C_3\backslash e_{14}$ 
have signs 
\[
\underbrace{a+x,\ b+x}_{\text{omit }e_{12}},\qquad
\underbrace{a+y,\ c+y}_{\text{omit }e_{13}},\qquad
\underbrace{b+z,\ c+z}_{\text{omit }e_{14}},
\] respectively.

(1) Within each brace, the two values are distinct.
Indeed,
\((a+x)+(b+x)=a+b=c\neq e\), and \((b+z)+(c+z)=b+c=a\neq e\), hence
\(a+x\ne b+x\) and \(b+z\ne c+z\). Similarly \((a+y)+(c+y)=a+c=b\ne e\), so
\(a+y\ne c+y\).

(2) Between each pair of braces, four double signs cannot be all distinct. 
For example, between \(\{a+x,b+x\}\) and \(\{b+z,c+z\}\):  Suppose all of them are different. $a+x\neq b+z$ or $c+z$ implies $x\neq c+z$ or $b+z$. $b+x\neq b+z$ or $c+z$ implies $x\neq e+z$ or $a+z$. It would force 
$x$ to avoid all four elements of $\mathbb{F}_2^2$, a contradiction. Thus, $a+x$, $b+x$, $b+z$ and $c+z$ cannot be all different.

(3) Between each pair of braces, exactly one element from one brace equals an element from the other.
 Suppose there are two pairs of them have the same double signs. Assume 
     $a+x=b+z$, $b+x=c+z$. This implies $x=c+z$ and $x=a+z$, impossible. Similary, $a+x=c+z$, $b+x=b+z$ cannot happen. Hence, there are exactly two of $a+x$, $b+x$, $b+z$ and $c+z$ are the same. 

\vspace{0.5cm}

Finally, we can analyze the double signs of the six paths starting at $v_1$.

Case 1. Three of $a+y,c+y$, $a+x$, $b+x$, $b+z$ and $c+z$ are the same. Then, remaining three of them are all different. In this case, we have all four double signs $a,b,c,e$ with exactly one of them appearing three times. 

Case 2. No three elements of $a+y,c+y$, $a+x$, $b+x$, $b+z$ and $c+z$  share the same double sign. Consequently, there are
precisely three pairs of equal values, and the value of each pair is
different from the others.

\vspace{0.5cm}

Therefore, \(\sigma(\mathscr P_1)\) is either \(\{p,p,q,q,s,s\}\) or
\(\{p,q,s,t,t,t\}\) with \(p,q,s,t\) distinct. By symmetry the same holds for
\(i=2,3,4\).
\end{proof}

\begin{lemma}\label{same}
Let $v_1,v_2,v_3,v_4$ be four vertices of   $\Sigma_4^*$ and $\mathbb{F}_2^2:=\{g,r,s,t\}$.
 Then, the following are equivalent:
\vspace{1cm}

(1) There exist two distinct vertices \(v_i\) and \(v_j\) in \(\Sigma_4^*\) such that
\[
\sigma(\mathscr{P}_i) = \sigma(\mathscr{P}_j),
\]
and this common multiset contains exactly two copies of each of the double signs \(r\), \(s\), and \(t\), where  $r,s,t\in \mathbb{F}_2^2$ are distinct and  $i,j\in [4]$ are distinct.

(2) \(\Sigma_4^*\) contains exactly three edges that either share a common vertex or form a triangle with double sign \(g\), where $g \in \mathbb{F}_2^2 \setminus \{r,s,t\}$.

(3) The graph $\Sigma_4^*$ does not contain a Hamiltonian path with a double sign $g$, where $g \in \mathbb{F}_2^2 \setminus \{r,s,t\}$.

\end{lemma}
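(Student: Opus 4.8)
The plan is to prove the three statements equivalent using (3) as a hub, i.e.\ establishing $(2)\Rightarrow(3)$, $(3)\Rightarrow(2)$, $(3)\Rightarrow(1)$, and $(1)\Rightarrow(3)$. Two preliminary reductions make the steps uniform. First, since $\Sigma_4^*$ realizes all four triangle double signs, every element of $\mathbb{F}_2^2$ is a triangle sign; relabelling the vertices so that the triangle of sign $g$ becomes $T_{234}$, I may invoke the standing convention and assume $g=e$, with $\sigma(T_{123})=a$, $\sigma(T_{124})=b$, $\sigma(T_{134})=c$ and $\{r,s,t\}=\{a,b,c\}$. Second, I record the basic fact that every $4$-cycle of $\Sigma_4^*$ has nonzero sign: a $4$-cycle is the symmetric difference of two triangles sharing an edge, so its sign is a sum of two distinct triangle signs, hence nonzero. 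Consequently the two Hamiltonian paths that share a given pair of endpoints have distinct signs, because their sum is exactly the sign of the $4$-cycle on all four vertices through those endpoints.

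For $(2)\Rightarrow(3)$ I argue structurally. A Hamiltonian path is a $P_4$, and I consider the fourth vertex $u$ (the apex of the $g$-star, or the vertex off the $g$-triangle) according to whether $u$ is an interior or an end vertex of the path. In each of the four resulting subcases the path uses a controlled number of $g$-edges together with some of the three distinct non-$g$ edges; a one-line cancellation, using $g+g=e$ and the distinctness of the three remaining signs, shows the path sign always lands in $\{r,s,t\}$ and never equals $g$, which is (3). For $(3)\Rightarrow(1)$ I use Lemma~\ref{lemma4}: each $\sigma(\mathscr{P}_i)$ is either $\{p,p,q,q,s,s\}$ or $\{p,q,s,t,t,t\}$. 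Since (3) forbids the sign $g$ on every Hamiltonian path, $g$ is absent from every $\sigma(\mathscr{P}_i)$; the second type contains all four values and is thereby excluded, so each $\sigma(\mathscr{P}_i)$ equals $\{r,r,s,s,t,t\}$, and in particular any two agree. For $(1)\Rightarrow(3)$, the equality $\sigma(\mathscr{P}_i)=\sigma(\mathscr{P}_j)$ missing $g$ means no $g$-signed path has $v_i$ or $v_j$ as an endpoint, so every $g$-signed path (if any) has both endpoints among the remaining two vertices $v_k,v_l$; there are only two such paths and they have distinct signs by the preliminary fact, so at most one is $g$-signed. Hence the number of $g$-signed Hamiltonian paths is at most $1$, and being even by the Key Lemma~\ref{key}, it is $0$.

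The main obstacle is $(3)\Rightarrow(2)$, the reconstruction of the global edge pattern from a purely numerical hypothesis. Here I reformulate (3): since a Hamiltonian path is a $4$-cycle with one edge deleted, its sign is $\sigma(C)+\sigma(f)$, so (3) is equivalent to $\sigma(f)\neq\sigma(C)$ for every $4$-cycle $C$ and every edge $f\in C$. Each edge lies in exactly two of the three $4$-cycles, whose signs are the three distinct nonzero elements of $\mathbb{F}_2^2$, so this pins $\sigma(f)$ into the two-element set $\{e,\mathrm{partner}(f)\}$, where $\mathrm{partner}(f)$ is the sign of the unique $4$-cycle avoiding $f$.

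Applying this to the three edges of $T_{234}$, whose partners are exactly $a,b,c$ and sum to $e$, an all-or-nothing parity argument (any nonempty proper subset of $\{a,b,c\}$ sums to a nonzero element) forces those three edges to be simultaneously $e$ or simultaneously non-$e$. In the first case the triangle $v_2v_3v_4$ is a monochromatic $g$-triangle, and the triangle constraints then force the remaining three edges to carry the distinct signs $a,b,c$; in the second case the triangle constraints force the star at $v_1$ to be monochromatic of sign $g$. Either way $\Sigma_4^*$ is a $\Sigma_4^{**}$ whose three $g$-edges form a triangle or a star, which is exactly (2). I expect the bookkeeping of the partner values and the final forcing of the three non-$g$ edges to be the most delicate part, even though each individual check is short.
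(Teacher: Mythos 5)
Your architecture is genuinely different from the paper's and, in three of its four implications, correct as written. The paper proves the cycle $(1)\Rightarrow(2)\Rightarrow(3)\Rightarrow(1)$, doing the hard step $(1)\Rightarrow(2)$ by explicit tables of the twelve path signs; you replace that computation by a ``pinning'' reconstruction $(3)\Rightarrow(2)$ and a short parity argument $(1)\Rightarrow(3)$ combining the Key Lemma~\ref{key} with the observation that the two Hamiltonian paths joining a fixed pair of vertices differ by a $4$-cycle of nonzero sign. Your $(3)\Rightarrow(1)$ coincides with the paper's, and your $(2)\Rightarrow(3)$ is the same edge-counting as the paper's partition argument. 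Notably, none of $(2)\Rightarrow(3)$, $(3)\Rightarrow(1)$, $(1)\Rightarrow(3)$ actually uses the normalization $g=e$.

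The gap is your opening reduction. Relabelling vertices permutes which triangle carries which sign, but it cannot change which element of $\mathbb{F}_2^2$ the hypothesis designates as $g$; the identity is a distinguished element (vertex relabellings, and even group automorphisms applied to all labels, fix $e$), so ``the triangle of sign $g$ becomes $T_{234}$'' yields $\sigma(T_{234})=g$, not $g=e$. The case $g\neq e$ is not vacuous: the assignment $\sigma(e_{12})=b$, $\sigma(e_{13})=c$, $\sigma(e_{14})=\sigma(e_{24})=\sigma(e_{34})=a$, $\sigma(e_{23})=e$ (the paper's Figure~\ref{all a1}) is a $\Sigma_4^*$ whose missing Hamiltonian-path sign is $a$. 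This matters because your $(3)\Rightarrow(2)$ uses $g=e$ essentially: both the pinned set $\{e,\mathrm{partner}(f)\}$ and the parity step applied to $T_{234}$ (whose sign is $e$) depend on it, so as written that implication is proved only for one of the four possible values of $g$. Two easy repairs exist. (i) Carry a general $g$: the pinning argument gives $\sigma(f)\in\{g,\mathrm{partner}(f)+g\}$; apply it to the edges of the triangle whose sign is $g$ (it exists since all four elements are triangle signs, and its three edges still have partners $a,b,c$); writing $\sigma(f)=g+\epsilon_f$ with $\epsilon_f\in\{e,\mathrm{partner}(f)\}$, the triangle constraint forces $\sum_f\epsilon_f=e$, and your all-or-nothing argument and both endgames go through verbatim. (ii) Alternatively, justify $g=e$ not by relabelling but by adding $g$ to every edge label: this preserves the property of being $\Sigma_4^*$ and all $4$-cycle signs, while shifting every edge, triangle, and Hamiltonian-path sign by $g$, hence carries conditions (1)--(3) with parameter $g$ to the same conditions with parameter $e$. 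With either repair your proof is complete, and it is cleaner than the paper's table-based treatment.
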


\vspace{1cm}

\begin{figure}[H]
    \centering
\begin{tikzpicture}[scale=1.5, every node/.style={font=\small}]
  \coordinate (v1) at (0,1.7);
  \coordinate (v2) at (3,1.7);
  \coordinate (v3) at (3,0);
  \coordinate (v4) at (0,0);

  \draw[thick] (v1)--(v2)--(v3)--(v4)--cycle;
  \draw[thick] (v1)--(v3);
  \draw[thick] (v2)--(v4);
  
\draw[black,thick] (v2) -- node[pos=0.55, above, text=black] {$x$} (v1);
\draw[black,thick] (v1) -- node[pos=0.35, above, text=black] {$y$} (v3);
\draw[black,thick] (v1) -- node[pos=0.55, left, text=black] {$z$} (v4);
\node[left]  at (v4) {$v_4$};
  
  \node[right]  at (v3) {$v_3$};
 
  \node[above]       at (v1) {$v_1$};
  \node[above]       at (v2) {$v_2$};
\end{tikzpicture}

 \caption{}
 \label{xyz}
\end{figure}

\vspace{1cm}

\begin{proof} First, we show (1) implies (2).

Recall that $\sigma(T_{123})=a$, $\sigma(T_{134})=c$, $\sigma(T_{234})=e$ and $\sigma(T_{124})=b$ .
 Without loss of generality, assume that the vertices $v_1,v_2$ satisfy $\sigma(\mathscr{P}_1)=\sigma(\mathscr{P}_2)$.  Label some unknown double signs of the edges incident to $v_1$ as $x,y,z$ by follows:  $\sigma(e_{12})=x, \sigma(e_{13})=y, \sigma(e_{14})=z$ (See Figure \ref{xyz}). Then, $\sigma(e_{23})=\sigma(\triangle v_1v_2v_3)+\sigma(e_{12})+\sigma(e_{13})=a+x+y$ and $\sigma(e_{24})=\sigma(\triangle v_1v_2v_4)+\sigma(e_{12})+\sigma(e_{14})=b+x+z$. Assume that no Hamiltonian path starting from either \(v_1\) or \(v_2\) carries double sign \(a\) (i.e. $g=a$). This implies $\sigma(C_1)+x\neq a$ and $\sigma(C_2)+x\neq a$.   Since $\sigma(C_1)=b$ and  $\sigma(C_2)=a$, we have $x\neq c$ or $e$. 
 This forces $x=a$ or $b$. Similarly, $y=a$ or $c$, $z=a$ or $e$.  Next, we compute the double signs of six paths starting at $v_1$, as well as $v_2$. For convenience, denote these paths by $p_{i1},p_{i2},...,p_{i6}$. One obtains the following table of double signs:

\begin{center}
\begin{tabular}{||c| c| c |c| c |c |c||} 
 \hline
 Starting at & $p_{i1}$ & $p_{i2}$ & $p_{i3}$ & $p_{i4}$ & $p_{i5}$ & $p_{i6}$\\ [0.5ex] 
 \hline\hline
 $v_1$ & $b+x$ & $a+x$&$a+y$ &$c+y$& $b+z$&$c+z$ \\ 
 \hline
 $v_2$ & $b+x$ & $a+x$ & $c+x+y$&$b+x+y$&$c+x+z$&$a+x+z$ \\
 [1ex] 
 \hline
\end{tabular}\\
\end{center}
\begin{center}

 Table 2: double signs of six Hamiltonian paths start at $v_1$, and another six at $v_2$.
\end{center}
We now distinguish two cases based on the double sign of the edge $e_{12}$ (Since this edge is between $v_1$ and $v_2$, we will consider two cases based on this edge), which is the value of  
$x$. 
Case 1. 
 Suppose $x=b$. Then the double signs in the table 2 simplify to:
\begin{center}
\begin{tabular}{||c| c| c |c| c |c |c||} 
 \hline
 Starting at & $p_{i1}$ & $p_{i2}$ & $p_{i3}$ &$p_{i4}$&$p_{i5}$&$p_{i6}$\\ [0.5ex] 
 \hline\hline
 $v_1$ & $e$ & $c$&$a+y$ &$c+y$& $b+z$&$c+z$ \\ 
 \hline
$ v_2$ & $e$ & $c$ & $a+y$&$e+y$&$a+z$&$c+z$ \\
 [1ex] 
 \hline
 
\end{tabular}\\

\end{center}
\begin{center}
   Table 3: Plug in $x=b$ into Table 2 to obtain this table.
\end{center}

Since $\sigma(\mathscr{P}_1)=\sigma(\mathscr{P}_2)$, we have $e+y=b+z$. It then follows that $y+z=b$. We also have $y=a$ or $c$, $z=a$ or $e$. This implies $y=c,z=a$. Calculate the remaining edges, we obtain the graph: 

\vspace{1cm}

\begin{figure}[h]
    \centering
\begin{tikzpicture}[scale=1.5, every node/.style={font=\small}]
  \coordinate (v1) at (0,1.7);
  \coordinate (v2) at (3,1.7);
  \coordinate (v3) at (3,0);
  \coordinate (v4) at (0,0);

  \draw[thick] (v1)--(v2)--(v3)--(v4)--cycle;
  \draw[thick] (v1)--(v3);
  \draw[thick] (v2)--(v4);
  
\draw[black,thick] (v2) -- node[pos=0.55, above, text=black] {$b$} (v1);
\draw[black,thick] (v1) -- node[pos=0.35, above, text=black] {$c$} (v3);
\draw[black,thick] (v1) -- node[pos=0.55, left, text=black] {$a$} (v4);
\node[left]  at (v4) {$v_4$};
\draw[black,thick] (v2) -- node[pos=0.55, right, text=black] {$e$} (v3);
\draw[black,thick] (v4) -- node[pos=0.35, above, text=black] {$a$} (v3);
\draw[black,thick] (v2) -- node[pos=0.55, left, text=black] {$a$} (v4);
\node[left]  at (v4) {$v_4$};

  \node[right]  at (v3) {$v_3$};
 
  \node[above]       at (v1) {$v_1$};
  \node[above]       at (v2) {$v_2$};
\end{tikzpicture}
\caption{Three edges with double sign $a$ that share a vertex.}
    \label{all a1}
\end{figure}
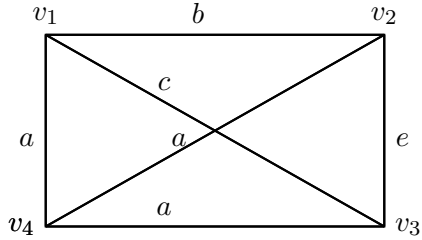
\vspace{1cm}

Thus, $\Sigma_4^*$ contains exactly three edges with double sign 
$a$ that  share a vertex. 

Case 2. 
Suppose instead that $x=a$. Then, the table 2 becomes
\begin{center}
\begin{tabular}{||c| c| c |c| c |c |c||} 
 \hline
 Starting at & $p_{i1}$ & $p_{i2}$ & $p_{i3}$ &$p_{i4}$&$p_{i5}$&$p_{i6}$\\ [0.5ex] 
 \hline\hline
 $v_1$ & $c$ & $e$&$a+y$ &$c+y$& $b+z$&$c+z$ \\ 
 \hline
$ v_2$ & $c$ & $e$ & $b+y$&$c+y$&$b+z$&$e+z$ \\
 [1ex] 
 \hline
\end{tabular}\\
\end{center}
\begin{center}
Table 4: Plug in $x=a$ into Table 2 to obtain this table.
\end{center}

Since $\sigma(\mathscr{P}_1)=\sigma(\mathscr{P}_2)$, we have $a+y=e+z$. Thus, $y+z=a$. Together with $y=a$ or $c$, $z=a$ or $e$, this implies $y=a,z=e$.  Calculate the remaining edges, we obtain the graph: 

\vspace{1cm}
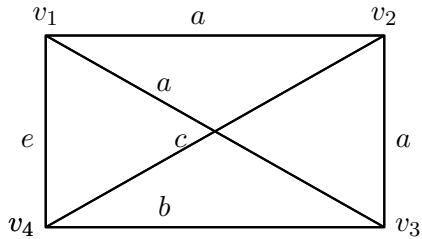
\begin{figure}[h]
    \centering
\begin{tikzpicture}[scale=1.5, every node/.style={font=\small}]
  \coordinate (v1) at (0,1.7);
  \coordinate (v2) at (3,1.7);
  \coordinate (v3) at (3,0);
  \coordinate (v4) at (0,0);

  \draw[thick] (v1)--(v2)--(v3)--(v4)--cycle;
  \draw[thick] (v1)--(v3);
  \draw[thick] (v2)--(v4);
  
\draw[black,thick] (v2) -- node[pos=0.55, above, text=black] {$a$} (v1);
\draw[black,thick] (v1) -- node[pos=0.35, above, text=black] {$a$} (v3);
\draw[black,thick] (v1) -- node[pos=0.55, left, text=black] {$e$} (v4);
\node[left]  at (v4) {$v_4$};
\draw[black,thick] (v2) -- node[pos=0.55, right, text=black] {$a$} (v3);
\draw[black,thick] (v4) -- node[pos=0.35, above, text=black] {$b$} (v3);
\draw[black,thick] (v2) -- node[pos=0.55, left, text=black] {$c$} (v4);
\node[left]  at (v4) {$v_4$};

  \node[right]  at (v3) {$v_3$};
 
  \node[above]       at (v1) {$v_1$};
  \node[above]       at (v2) {$v_2$};
\end{tikzpicture}
\caption{Three edges with double sign $a$ form a triangle.}
    \label{three a}
\end{figure}
\vspace{1cm}

Thus, $\Sigma_4^*$ contains exactly three edges with double sign 
$a$ that  form a triangle.  

The arguments in both cases rely on the assumption that no path have double sign $a$. Next, let $w\in\{b,c,e\}$. Suppose no path have double sign $w$, with the same argument above we will obtain the same result that $\Sigma_4^*$ contains exactly three edges with double sign 
$w$ that share a vertex or form a triangle. 

For any two distinct vertices $v_i,v_j$ of a graph $\Sigma_4^*$, under the condition $\sigma(\mathscr{P}_i)=\sigma(\mathscr{P}_j)$,  use the same reasoning above, it will turn out that  $\Sigma_4^*$ contains exactly three edges carrying the same double sign either share a common vertex or form a triangle. As required.

Next, show (2) implies (3). 

Suppose \(\Sigma_4^*\) contains exactly three edges that either share a common vertex or form a triangle with double sign \(g \in \mathbb{F}_2^2\). Then, remaining three edges will have double signs $r,s$ and $t$.
Partition the double signs of six edges of \(\Sigma_4^*\) into two sets: $A=\{g,g,g\}$ and $B=\{r,s,t\}$. Then, any Hamiltonian path $p$ in \(\Sigma_4^*\) will have double sign: $\sigma(p)=g+x+y$, where $ x\in B, y\in A\cup B\backslash x$. If $y\in A$, then  $\sigma(p)=g+x+g=x$. Thus, the  double sign of this path is not $g$. If $y\in B\backslash x$, then $\sigma(p)=g+x+\alpha=\beta$, where distinct elements $\alpha,\beta\in B\backslash x$. Thus, the  double sign of $p$ is not $g$. 

Finally, we show (3) implies (1).

Suppose the graph $\Sigma_4^*$ does not contain a Hamiltonian path with a double sign $g$, where $g \in \mathbb{F}_2^2 \setminus \{r,s,t\}$. Then, by Lemma \ref{lemma4}, we have  $\sigma(\mathscr{P}_i)=\sigma(\mathscr{P}_j)=\{r,r,s,s,t,t\}$ for some distinct $i,j\in[4]$.
\end{proof}

\begin{lemma}\label{thm11}
A graph $\Sigma_4^*$  contains a vertex $v$, such that starting from $v$, we can find four Hamiltonian paths with different double signs if and only if $\Sigma_4^*$ does not contain exactly three edges that share a common vertex or form a triangle with double sign $g$, for each $g\in \mathbb{F}_2^2$.
\end{lemma}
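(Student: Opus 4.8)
The plan is to reduce the statement to a short counting argument resting on the two structural results already in hand, Lemma~\ref{lemma4} (each start vertex realizes one of exactly two multiset shapes) and Lemma~\ref{same} (the equivalence of the ``three equal edges'' configuration with the absence of a Hamiltonian path of a prescribed sign), together with the parity supplied by the Key Lemma~\ref{key}.

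First I would reformulate both sides of the biconditional. By Lemma~\ref{lemma4}, for each vertex $v_i$ the multiset $\sigma(\mathscr{P}_i)$ is either $\{p,p,q,q,s,s\}$ (call this \emph{type A}; it omits exactly one element of $\mathbb{F}_2^2$) or $\{p,q,s,t,t,t\}$ (\emph{type B}; it realizes all four elements). Since there are only four possible double signs, ``there is a vertex $v$ from which four Hamiltonian paths carry four distinct double signs'' is \emph{exactly} the statement that some vertex is type B. On the other side, by the equivalence of (2) and (3) in Lemma~\ref{same}, for a fixed $g$ the graph $\Sigma_4^*$ contains exactly three equal-sign edges (sharing a vertex or forming a triangle) of sign $g$ if and only if no Hamiltonian path has double sign $g$. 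Hence the right-hand condition ``for each $g$ no such triple exists'' is equivalent to ``every $g\in\mathbb{F}_2^2$ is the double sign of some Hamiltonian path.'' So the lemma becomes: some vertex is type B $\iff$ all four double signs are realized by Hamiltonian paths.

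The forward direction is then immediate: a type B vertex $v$ already exhibits a path of every double sign in $\mathscr{P}_v$, so all four signs are realized, and by the contrapositive of (2)$\Leftrightarrow$(3) in Lemma~\ref{same} no three-edge configuration of any sign $g$ exists. For the converse I would argue by contradiction, assuming every vertex is type A while all four signs are realized. Let $g_i$ denote the sign omitted at $v_i$, and split on whether the $g_i$ coincide. If two vertices omit the same sign $g$, then, because the type A multiset omitting $g$ is the unique multiset $\{r,r,s,s,t,t\}$ with $\{r,s,t\}=\mathbb{F}_2^2\setminus\{g\}$, these two vertices satisfy condition (1) of Lemma~\ref{same}; by (1)$\Rightarrow$(3) this forces no Hamiltonian path to have sign $g$, contradicting that $g$ is realized. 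If instead the four omitted signs are pairwise distinct, then for every $g$ precisely one vertex omits $g$ and the remaining three (being type A and not omitting $g$) each contain $g$ twice. Writing $n_g$ for the number of undirected Hamiltonian paths of double sign $g$ and using that each such path is counted at both of its endpoints, the total count of $g$ across all four sets $\mathscr{P}_i$ equals $2n_g$; this total is $3\cdot 2=6$, so $n_g=3$, contradicting the evenness of $n_g$ from the Key Lemma~\ref{key}. Either branch yields a contradiction, so some vertex must be type B.

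The main obstacle is the converse, and within it the double-count bookkeeping: one must verify that each undirected Hamiltonian path of $\Sigma_4^*$ lies in exactly two of the sets $\mathscr{P}_i$ (namely those indexed by its two endpoints), so that the per-vertex contributions assemble into $2n_g$, which can then be confronted with the parity of Key Lemma~\ref{key}. The only genuine subtlety is ensuring the case split is exhaustive and that the ``repeated omitted sign'' branch really matches condition (1) of Lemma~\ref{same}; both points follow from the fact that a type A multiset is determined by the single sign it omits.
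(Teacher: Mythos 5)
Your proposal is correct and takes essentially the same approach as the paper: both use the type dichotomy of Lemma~\ref{lemma4} and the equivalences of Lemma~\ref{same}, and both settle the all-type-A case by the identical double count (each sign appears six times across the $\sigma(\mathscr{P}_i)$, each path is counted at its two endpoints, so $n_g=3$ is odd) against the parity of the Key Lemma~\ref{key}. The only difference is organizational: you reformulate the right-hand side via (2)$\Leftrightarrow$(3) at the outset and handle the repeated-omitted-sign case by (1)$\Rightarrow$(3), whereas the paper uses (1)$\Rightarrow$(2) to force the four multisets to be pairwise distinct before counting.
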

 \begin{proof} Suppose $\Sigma_4^*$ does not contain exactly three edges that share a common vertex or form a triangle with double sign $g$, for each $g\in \mathbb{F}_2^2$. By Lemma \ref{lemma4}, the multiset  $\sigma(\mathscr{P}_i)$  is equal to $\{x,x,y,y,z,z\}$ or $\{x,y,z,r,r,r\}$ for some distinct $x,y,z,r\in \mathbb{F}_2^2$. If any $\sigma(\mathscr{P}_i)=\{x,y,z,r,r,r\}$ for $i=1,2,3,4$, then, we are done. Otherwise, we may assume none of $\sigma(\mathscr{P}_i)$ are equal to $\{x,y,z,r,r,r\}$. Suppose $\sigma(\mathscr{P}_1)=\{x,x,y,y,z,z\}$. By the assumption and Lemma \ref{same}, $\sigma(\mathscr{P}_1)$, $\sigma(\mathscr{P}_2)$, $\sigma(\mathscr{P}_3)$ and $\sigma(\mathscr{P}_4)$ are distinct. Without loss of generality, we may assume $\sigma(\mathscr{P}_2)=\{x,x,y,y,r,r\}$, $\sigma(\mathscr{P}_3)=\{x,x,r,r,z,z\}$, and $\sigma(\mathscr{P}_4)=\{r,r,y,y,z,z\}$. Put all above double signs together, we can see that there are 24 double signs of the paths. There are six of each $x,y,z,r$.  Each Hamiltonian path in \(\Sigma_4^*\) has been counted twice. Thus, there are exactly three Hamiltonian paths in \(\Sigma_4^*\) with double sign \(x\). This contradicts the Key Lemma. Hence, there must exist a vertex \(v\) such that, starting from \(v\), we can find four Hamiltonian paths with distinct double signs.

Conversely, suppose there exists
 $g \in \mathbb{F}_2^2$  such that  $\Sigma_4^*$  contains exactly three edges that share a common vertex or form a triangle with a double sign  $g$. By Lemma \ref{same}, $\Sigma_4^*$ does not contain a Hamiltonian path with a double sign $g$. That is, for every vertex \(v \in V(\Sigma_4^*)\), there do not exist four Hamiltonian paths starting at \(v\) with distinct double signs.
\end{proof}
 
Finally, we are ready to prove the CASE $\beta$. 

\begin{lemma} (CASE $\beta$)  If $\Sigma_n$ contains a  subgraph $\Sigma_4^*$ and $n>5$. Then,  $\Sigma_n$  contains all possible double signs of Hamiltonian circles.
\end{lemma}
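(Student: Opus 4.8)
The plan is to reduce everything to Lemma~\ref{lemma15}: after normalizing a single vertex $v_1$, the edge $v_iv_j$ of $\Sigma_n\setminus v_1$ carries the double sign $\sigma(T_{1ij})$, so it suffices to exhibit one path in $\Sigma_n\setminus v_1$ whose edges realize all four elements of $\mathbb{F}_2^2$. Equivalently --- and this is the form I would actually use --- I would build a base Hamiltonian circle $H$ of $\Sigma_n\setminus v_1$ and then reinsert $v_1$ across four of its edges $v_iv_j$, exactly as in Case~1 of Lemma~\ref{lemmab}: the reinsertion $H\,\triangle\,T_{1ij}$ changes the double sign by $\sigma(T_{1ij})$, so if the edges of $H$ meet four distinct triangle double signs $\sigma(T_{1ij})$ we obtain four Hamiltonian circles of $\Sigma_n$ with the four distinct signs.

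First I would fix the copy of $\Sigma_4^*$ on $\{v_1,v_2,v_3,v_4\}$ with $\sigma(T_{123})=a,\ \sigma(T_{124})=b,\ \sigma(T_{134})=c,\ \sigma(T_{234})=e$, and note that since $n>5$ there are at least two further vertices $v_5,v_6,\dots$. I would then apply Lemma~\ref{thm11} to this $\Sigma_4^*$, splitting into two cases. In the generic case some vertex of $\Sigma_4^*$ admits four Hamiltonian paths (of the four vertices) with pairwise distinct double signs; after relabeling I take this vertex to be $v_1$, so that all four values of $\mathbb{F}_2^2$ already occur among the six four-vertex paths issuing from $v_1$ (Lemma~\ref{lemma4} forces the multiset $\{p,q,s,t,t,t\}$). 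I would then run each such four-vertex path out through a single fixed traversal $v_5v_6\cdots v_n$ of the spare vertices and back to $v_1$, producing four Hamiltonian circles of $\Sigma_n$; here the at least two spare vertices give the room to arrange the connecting edges so that the four resulting double signs stay distinct.

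In the degenerate case $\Sigma_4^*$ is a $\Sigma_4^{**}$: three of its edges share a common double sign $g$ and, by Lemma~\ref{same}, no Hamiltonian path of the four vertices has double sign $g$. Thus the four vertices alone realize only the three signs $\mathbb{F}_2^2\setminus\{g\}$, and the missing value $g$ must be supplied by a spare vertex. Here I would examine the induced four-vertex subgraphs obtained by exchanging one of $v_1,\dots,v_4$ for a spare vertex $v_5$ (or $v_6$): using the global hypothesis that $\Sigma_n$ realizes all four triangle double signs, I would argue that either one of these subgraphs is a non-degenerate $\Sigma_4^*$ --- returning us to the generic case --- or the forced edge labels produce an edge into $\{v_5,v_6,\dots\}$ carrying the missing sign $g$, which can be spliced into a path meeting all four edge double signs, whereupon Lemma~\ref{lemma15} finishes.

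The main obstacle is exactly this degenerate $\Sigma_4^{**}$ configuration, where the four chosen vertices are \emph{sign-deficient} and one is forced to manufacture the fourth double sign from the spare vertices; controlling the double signs of the connecting (bridge) edges, whose endpoints shift as the four-vertex path varies, is the delicate bookkeeping, and it is precisely the hypothesis $n>5$ (at least two spare vertices) that provides the freedom needed to keep the four constructed circles' double signs pairwise distinct.
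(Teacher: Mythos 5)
Your overall skeleton---fix the $\Sigma_4^*$, invoke Lemma~\ref{thm11} to split into a generic and a degenerate case---matches the paper's strategy, but both halves have gaps, and the degenerate half fails outright. In the generic case you apply Lemma~\ref{thm11} to $\Sigma_4^*$ \emph{as it sits in} $\Sigma_n$ and then close each four-vertex path into a Hamiltonian circle through a bridge edge from the path's endpoint into the spare vertices. Since the four paths end at different vertices of $\{v_2,v_3,v_4\}$, these bridge edges differ, and their uncontrolled signs can collapse the four distinct path signs into fewer circle signs; the claim that ``the at least two spare vertices give the room'' is an assertion, not an argument. The paper's Case 1 resolves this by normalizing an outside vertex $v_5$ \emph{first}, so that every bridge edge into $v_5$ carries the identity $e$ and path signs are preserved; but then Lemma~\ref{thm11} must be applied to the \emph{switched} copy, and the $\Sigma_4^{**}$ condition is a condition on edge signs, hence not switching-invariant. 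Whether it holds depends on which outside vertex is normalized, and that is exactly what forces the paper's three-case structure.

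The degenerate case is where your proposal genuinely breaks: the dichotomy you sketch is unproved and in fact false, and the endgame via Lemma~\ref{lemma15} cannot work in general. Concretely, label the triangle $v_1v_2v_3$ with the three nonzero elements $a,b,c$ of $\mathbb{F}_2^2$ and give every other edge of $K_n$ the sign $e$. The $K_4$ on $\{v_1,v_2,v_3,v_4\}$ is a $\Sigma_4^*$ (its triangle signs are $e,a,b,c$), so the hypothesis of the lemma holds. However, normalizing any $v_j$ with $j\ge 4$ leaves the nonidentity signs only on the triangle $v_1v_2v_3$, and a path can use at most two edges of a triangle, so every path shows at most three distinct edge signs; normalizing $v_1$ (or $v_2$, $v_3$) leaves only three signs in the entire graph. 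Hence Lemma~\ref{lemma15} is inapplicable for \emph{every} choice of normalized vertex. Both branches of your dichotomy also fail here: exchanging a core vertex for a spare vertex always produces either another degenerate $\Sigma_4^{**}$ (three $e$-edges at the spare vertex) or a $K_4$ that is not a $\Sigma_4^*$ at all, and although edges of the missing sign $g=e$ into the spares abound, they cannot be spliced into a four-sign path. Yet the conclusion of the lemma is true in this example: the paper's Case 3 (all outside vertices attaching with a constant sign) produces the four double signs by explicit Hamiltonian circles that use exactly two triangle edges, with the uniform bridge edges cancelling in pairs. This uniform-attachment configuration is the real content of CASE $\beta$, and any correct proof must handle it by a mechanism other than Lemma~\ref{lemma15}; your proposal has no such mechanism.
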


\begin{proof}
We proceed by considering three cases.

\medskip

\textbf{Case 1.} Suppose there exists a vertex, say \(v_5\), not in \(\Sigma_4^*\) such that, after normalizing \(v_5\), the subgraph \(\Sigma_4^*\) becomes \(\Sigma_4^{*'}\), and \(\Sigma_4^{*'}\) does not contain three edges that either share a common vertex or form a triangle with double sign \(x\), for any \(x \in \mathbb{F}_2^2\).
 Then, by Lemma \ref{thm11}, there exists a vertex, say \(v_1\), in \(\Sigma_4^{*'}\) such that the six Hamiltonian paths \(p_1', p_2', \ldots, p_6'\) starting from \(v_1\) in \(\Sigma_4^{*'}\) yield all possible double signs. Connect each ending point of $p_i'$ with $v_5$ to form a path $p_i$ for $i=1,2...,6$. Then, $\sigma(p_i)=\sigma(p_i')$. 
Without loss of generality, we may assume $p_1,p_2,p_3,p_4$ have distinct double signs.  Label the remaining three vertices of $\Sigma_4^{*'}$ as \(v_2\), \(v_3\), and \(v_4\). Denote the induced subgraph on \(\{v_1,v_2,v_3,v_4,v_5\}\) by \(K_5^1\). Relabel the vertices of \(\Sigma_n\) as \(v_1,v_2,v_3,v_4,v_5,\dots,v_n\) and denote the path \(v_5v_6 \cdots v_nv_1\) by \(p\). Connect $K_5^1$ with the path $p$ to form a necklace, see Figure ~\ref{K5}.

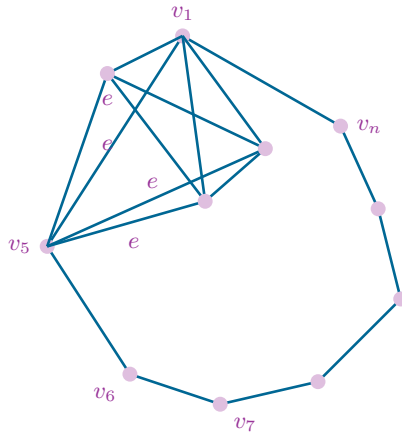
\begin{figure}[H]
    \centering
\begin{tikzpicture}[
  scale=1,
  dot/.style={circle,fill=violet!25,draw=none,inner sep=2pt},
  vtx/.style={circle,fill=violet!25,draw=none,inner sep=2pt},
  ed/.style={line width=1pt, draw=MidnightBlue},
  lbl/.style={font=\scriptsize\itshape, text=violet!80},
  every node/.style={font=\scriptsize}
]

\coordinate (v1) at (-.5,2.6);
\coordinate (vn) at (1.6,1.4);
\coordinate (r2) at (2.1,0.3);
\coordinate (r1) at (2.4,-0.9);
\coordinate (v8) at (1.3,-2.0);
\coordinate (v7) at (0.0,-2.3);
\coordinate (v6) at (-1.2,-1.9);
\coordinate (v5) at (-2.3,-0.2);

\draw[ed]
  (v5)--(v6)--(v7)--(v8)--(r1)--(r2)--(vn)--(v1)--(v5);

\node[vtx] at (v1) {};
\node[vtx] at (vn) {};
\node[vtx] at (r2) {};
\node[vtx] at (r1) {};
\node[vtx] at (v8) {};
\node[vtx] at (v7) {};
\node[vtx] at (v6) {};
\node[vtx] at (v5) {};

\node[lbl, above=2pt]  at (v1) {$v_1$};
\node[lbl, right=2pt]  at (vn) {$v_n$};
\node[lbl, below right=1pt] at (v7) {$v_7$};
\node[lbl, below left=1pt]  at (v6) {$v_6$};
\node[lbl, left=2pt]   at (v5) {$v_5$};

\coordinate (a) at (-1.5,2.1);
\coordinate (b) at (0.6,1.1);
\coordinate (c) at (-0.2,0.4);

\draw[ed] (a)--(b);
\draw[ed] (a)--(c);
\draw[ed] (b)--(c);
\draw[ed] (v1)--(a);
\draw[ed] (v1)--(b);
\draw[ed] (v1)--(c);
\draw[ed] (v5)--(a);
\draw[ed] (v5)--(b);
\draw[ed] (v5)--(c);

\node[vtx] at (a) {};
\node[vtx] at (b) {};
\node[vtx] at (c) {};

\node[lbl] at (-1.5,1.75) {$e$};
\node[lbl] at (-1.5,1.15) {$e$};
\node[lbl] at (-0.9,0.65) {$e$};
\node[lbl] at (-1.15,-0.15) {$e$};

\end{tikzpicture}

 \caption{A single \(K_5\) necklace}
    \label{K5}
\end{figure}

\emph{Claim:} The above necklace contains Hamiltonian circles with all possible double signs in \(\mathbb{F}_2^2\). To see this, for each \(i \in \{1,2,3,4\}\), the union \(p_i \cup p\) forms a Hamiltonian circle in \(\Sigma_n\). The double sign of \(p_i \cup p\) is given by
\[
\sigma(p_i \cup p) = \sigma(p_i) + \sigma(p),
\]
and since \(\sigma(p)\) is fixed (say, equal to \(g\)) and \(g \mathbb{F}_2^2 = \mathbb{F}_2^2\), it follows that
\[
\{\sigma(p_i)+ \sigma(p) : i\in\{1,2,3,4\}\} = \mathbb{F}_2^2.
\]

\medskip

\textbf{Case 2.} Suppose that for each vertex not in \(\Sigma_4^*\), when it is normalized one at a time,  
the subgraph \(\Sigma_4^*\) becomes \(\Sigma_4^{**}\), where \(\Sigma_4^{**}\) contains exactly three edges that either share a common vertex or form a triangle with a common double sign \(x\) for some \(x \in \mathbb{F}_2^2\).
 Pick a vertex not in \(\Sigma_4^*\) and call it \(v_5\), then normalize it. The subgraph \(\Sigma_4^*\) becomes \(\Sigma_4^{**}\) with vertices \(v_1,v_2,v_3,v_4\). Denote the induced subgraph on \(\{v_1,v_2,v_3,v_4,v_5\}\) by \(K_5^1\). Now, assume there is another vertex \(v_6\) such that not all of the edges \(v_1v_6, v_2v_6, v_3v_6,\) and \(v_4v_6\) share the same double sign. Without loss of generality, assume \(\sigma(v_1v_6) \neq \sigma(v_2v_6)\), and denote \(\sigma(v_1v_6)=r\) and \(\sigma(v_2v_6)=t\) for some distinct \(r,t\in \mathbb{F}_2^2\). By Lemma \ref{same}, there exist three distinct double signs \(x,y,z\) among the six Hamiltonian paths starting at \(v_1\) in $\Sigma_4^{*'}$; choose three such paths \(p_1,p_2,p_3\) with \(\sigma(p_1)=x\), \(\sigma(p_2)=y\), and \(\sigma(p_3)=z\). Connect the ending vertices  of these paths to \(v_5\) to form new paths \(p_1',p_2',p_3'\), which are three Hamiltonian paths in $K_5^1$; since the connecting edge from the ending vertex of $p_i'$ to \(v_5\) has double sign \(e\), we have \(\sigma(p_i') = \sigma(p_i)\) for \(i=1,2,3\). Similarly, by Lemma \ref{same} in $K_5^1$, we can construct three Hamiltonian paths \(p_4',p_5',p_6'\) starting at \(v_2\) and ending at \(v_5\) with double signs \(x,y,z\), respectively.

Now, consider the Hamiltonian circles formed as follows. The path \(p:=v_5v_7v_8\cdots v_nv_6\) has a fixed double sign, say \(l\) (i.e., \(\sigma(v_5v_7v_8\cdots v_nv_6)=l\)). Then, the Hamiltonian circle formed by \(v_1v_6\), $p$ and one of the paths \(p_i'\) has a double sign
 $l\,r\,\sigma(p_i)$ for $i=1,2,3$,

and the Hamiltonian circle formed by \(v_2v_6\), $p$ and one of the paths \(p_j'\) has double sign
 $l\,t\,\sigma(p_j')$ for $j=4,5,6$.

Thus, these six Hamiltonian circles above have double signs \(l+r+x\), \(l+r+y\), \(l+r+z\), \(l+t+x\), \(l+t+y\), and \(l+t+z\). By the properties of the group \(\mathbb{F}_2^2\), at least one of \(l+t+x\), \(l+t+y\), or \(l+t+z\) must be distinct from all of \(l+r+x\), \(l+r+y\), and \(l+r+z\), ensuring that every double sign in \(\mathbb{F}_2^2\) appears among all Hamiltonian circles in $\Sigma_n$.

\medskip

\textbf{Case 3.} Use the same setup as in Case~2. Now assume that for each \(i\in\{6,7,\ldots,n\}\) the four edges
\(v_1v_i, v_2v_i, v_3v_i, v_4v_i\) all have the same double sign.

\textbf{Subcase 3a.} Assume $n=6.$ 
Partition the double signs of six edges of \(\Sigma_4^*\) into two sets $A=\{x,x,x\}$ and $B\{y,z,w\}$, where $x,y,z,w\in\mathbb{F}_2^2$ are distinct. Observe that any two edges in \(\Sigma_4^{**}\) with distinct double signs from $B$ form a length-2 path. There are three such paths; denote them as $p_{wy}, p_{yz},p_{wz}$. Pick any two edges in \(\Sigma_4^{**}\) with double sign $x$, then these two edges form a path $p_{xx}$ with double sign $e$. Notice that $p_{wy}, p_{yz},p_{wz}$ and $p_{xx}$ have distinct double signs. 

We now embed each of these paths in a Hamiltonian circle of \(\Sigma_n\).
For example, take \(p_{wy}\) and let \(v_{wy}\) be the unique vertex of
\(V(\Sigma_4^{**})\setminus V(p_{wy})\).
Connect one endpoint of \(p_{wy}\) to \(v_6\) (denote this edge as $e_1$) and the other endpoint of \(p_{wy}\) to \(v_5\) (denote as $e_3$).  
Then connect \(v_6\) to \(v_{wy}\) (denote it $e_{2}$) and \(v_{wy}\) to \(v_5\) (denote it as $e_4$).  By our assumption $\sigma(e_1)=\sigma(e_3)$, and $\sigma(e_2)=\sigma(e_4)$
Thus, the resulting circle is a Hamiltonian circle \(H_{wy}\) in \(\Sigma_n\) and $$\sigma(H_{wy})=\sigma(e_1)+\sigma(e_3)+\sigma(e_2)+\sigma(e_4)+\sigma(p_{wy})=\sigma(p_{wy}).$$
Similarly we obtain Hamiltonian circles \(H_{wz}, H_{yz}, H_{xx}\) with
\(\sigma(H_{wz})=\sigma(p_{wz})\),
\(\sigma(H_{yz})=\sigma(p_{yz})\),
\(\sigma(H_{xx})=\sigma(p_{xx})\).
Since \(p_{wy}, p_{yz}, p_{wz}, p_{xx}\) have distinct double signs, so do
\(H_{wy}, H_{wz}, H_{yz}, H_{xx}\).

\textbf{Subcase 3b.} Assume $n>6.$                                         First of all, we want to construct a path $p_{45}$. If $n=7$, then $p_{45}:=v_4v_5$. If $n>7$, then   $p_{45}:=v_4v_8v_9...v_nv_5$.    Observe that among the six edges in \(\Sigma_4^{**}\), four of them have distinct double signs.
Denote these four edges by \(e_1, e_2, e_3,\) and \(e_4\).  
We aim to construct four Hamiltonian circles \(H_1, H_2, H_3,\) and \(H_4\) such that \(H_i\) contains the edge \(e_i\) for each \(i=1,2,3,4\).  
Without loss of generality, assume that \(e_1:=v_1v_4\) is incident to \(v_4\); then we construct the Hamiltonian circle \(H_1\) in the left panel of Figure~\ref{fig:two-sketches}.
Assume $e_2:=v_1v_2$ does not connect to $v_4$, we construct a Hamiltonian circle $H_2$ as in the right panel of figure \ref{fig:two-sketches}. We construct $H_3,H_4$ similarly.   Notice that all edges from \(v_6\) to \(\{v_1,v_2,v_3,v_4\}\) have the same double sign, and all edges from \(v_7\) to \(\{v_1,v_2,v_3,v_4\}\) have the same double sign.   Thus, $$\sigma(H_1)=\sigma(p_{45})+\sigma(e_1)+\sigma(v_1v_7)+\sigma(v_7v_3)+\sigma(v_3v_6)+\sigma(v_6v_2)+\sigma(v_2v_5)=\sigma(p_{45})+\sigma(e_1).$$
Similarly, \begin{align*}
\sigma(H_2) &= \sigma(p_{45})+\sigma(e_2),\\
\sigma(H_3) &= \sigma(p_{45})+\sigma(e_3),\\
\sigma(H_4) &= \sigma(p_{45})+\sigma(e_4).
\end{align*}

Since the double signs of $e_1,e_2,e_3$ and $e_4$ are distinct, the double signs of $H_1,H_2,H_3 $ and $H_4$ are distinct. 

\begin{figure}[H]
\centering

\begin{minipage}{0.45\textwidth}
\centering
\begin{tikzpicture}[
  scale=1.05,
  vtx/.style={circle,fill=black,inner sep=1.6pt},
  ed/.style={line width=0.9pt},
  dasheded/.style={line width=0.9pt, dashed},
  rededge/.style={line width=2.2pt, draw=red!70},
  lbl/.style={font=\scriptsize}
]
\coordinate (v1) at (0,2);
\coordinate (v4) at (0,0.3);
\coordinate (v2) at (2.0,2);
\coordinate (v6) at (3.2,1.2);
\coordinate (v3) at (2.1,0.3);
\coordinate (v7) at (1.0,-0.8);
\coordinate (v8) at (-0.4,0.6);
\coordinate (v9) at (-1,1.3);
\coordinate (vn) at (-0.3,3);
\coordinate (v5) at (1.1,3);

\draw[ed] (v3)--(v6);
\draw[ed] (v4)--(v8)--(v9);
\draw[ed] (v1)--(v7)--(v3);
\draw[ed] (v2)--(v5);
\draw[dasheded] (v9)--(vn);
\draw[ed] (vn)--(v5);
\draw[ed] (v2)--(v6);

\draw[rededge] (v1)--(v4);

\node[vtx] at (v1) {}; \node[lbl, left=2pt] at (v1) {$v_1$};
\node[vtx] at (v2) {}; \node[lbl, right=1pt] at (v2) {$v_2$};
\node[vtx] at (v3) {}; \node[lbl, right=1pt] at (v3) {$v_3$};
\node[vtx] at (v4) {}; \node[lbl, below=1pt] at (v4) {$v_4$};
\node[vtx] at (v5) {}; \node[lbl, above=1pt] at (v5) {$v_5$};
\node[vtx] at (v6) {}; \node[lbl, right=1pt] at (v6) {$v_6$};
\node[vtx] at (v7) {}; \node[lbl, below=1pt] at (v7) {$v_7$};
\node[vtx] at (v8) {}; \node[lbl, below left=1pt] at (v8) {$v_8$};
\node[vtx] at (v9) {}; \node[lbl, left=1pt] at (v9) {$v_9$};
\node[vtx] at (vn) {}; \node[lbl, left=1pt] at (vn) {$v_n$};
\end{tikzpicture}
\end{minipage}%
\hfill
\begin{minipage}{0.45\textwidth}
\centering
\begin{tikzpicture}[
  scale=1.05,
  vtx/.style={circle,fill=black,inner sep=1.6pt},
  ed/.style={line width=0.9pt},
  dasheded/.style={line width=0.9pt, dashed},
  rededge/.style={line width=2.2pt, draw=red!70},
  lbl/.style={font=\scriptsize}
]
\coordinate (v1) at (0,2);
\coordinate (v4) at (0,0.3);
\coordinate (v2) at (2.0,2);
\coordinate (v6) at (3.2,1.2);
\coordinate (v3) at (2.1,0.3);
\coordinate (v7) at (1.0,-0.8);
\coordinate (v8) at (-0.4,0.6);
\coordinate (v9) at (-1,1.3);
\coordinate (vn) at (-0.3,3);
\coordinate (v5) at (1.1,3);

\draw[ed] (v3)--(v6);
\draw[ed] (v4)--(v8)--(v9);
\draw[ed] (v4)--(v7)--(v3);
\draw[ed] (v1)--(v5);
\draw[dasheded] (v9)--(vn);
\draw[ed] (vn)--(v5);
\draw[ed] (v2)--(v6);


\draw[rededge] (v1)--(v2);

\node[vtx] at (v1) {}; \node[lbl, left=2pt] at (v1) {$v_1$};
\node[vtx] at (v2) {}; \node[lbl, right=1pt] at (v2) {$v_2$};
\node[vtx] at (v3) {}; \node[lbl, right=1pt] at (v3) {$v_3$};
\node[vtx] at (v4) {}; \node[lbl, below=1pt] at (v4) {$v_4$};
\node[vtx] at (v5) {}; \node[lbl, above=1pt] at (v5) {$v_5$};
\node[vtx] at (v6) {}; \node[lbl, right=1pt] at (v6) {$v_6$};
\node[vtx] at (v7) {}; \node[lbl, below=1pt] at (v7) {$v_7$};
\node[vtx] at (v8) {}; \node[lbl, below left=1pt] at (v8) {$v_8$};
\node[vtx] at (v9) {}; \node[lbl, left=1pt] at (v9) {$v_9$};
\node[vtx] at (vn) {}; \node[lbl, left=1pt] at (vn) {$v_n$};
\end{tikzpicture}
\end{minipage}

\caption{ Left: red edge \(e_1:=v_1v_4\). Right: red edge  \(e_2:=v_1v_2\).}
\label{fig:two-sketches}
\end{figure}
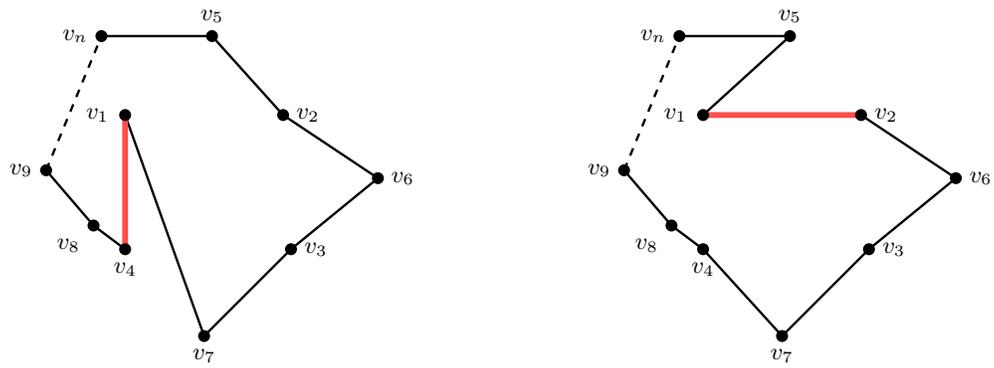
\end{proof}

\newpage

\section{Acknowledgment}

My sincere thanks go to Professor Thomas Zaslavsky for his guidance and many helpful suggestions while preparing this draft.



\begin{thebibliography}{9}


\bibitem{1}  Thomas Zaslavsky. Negative (and positive) circles in signed graphs: A problem collection.  AKCE Int. J. Graphs Combinatorics, 15 (2018), no. 1, 31--48. MR 3803228. Zbl 1390.05085.\label{eq:example}
\bibitem{2} Xiyong Yan. Hamiltonian cycles in signed and multisigned complete graphs.
arXiv:2502.10612 [math.CO], 2025.

\end{thebibliography}
\end{document}